\documentclass[12pt]{article}
\usepackage[margin=1in]{geometry}
\usepackage{graphicx}
\usepackage[hidelinks]{hyperref}
\usepackage{subcaption}
\usepackage{float}
\usepackage{amssymb,amsmath,amsthm}
\usepackage{newtxtext,newtxmath}
\usepackage[dvipsnames]{xcolor}
\usepackage{tikz-cd}
\usetikzlibrary{decorations.markings}

\usepackage[backend=biber,style=numeric,maxbibnames=8]{biblatex}
\definecolor{addblue}{rgb}{0.1,0,0.8}

\usepackage[normalem]{ulem}

\definecolor{darkgrn}{rgb}{0,0.75,0}

\theoremstyle{plain} 
\newtheorem{theorem}{Theorem}[section]
\newtheorem{lemma}[theorem]{Lemma}
\newtheorem{proposition}[theorem]{Proposition}

\theoremstyle{definition} 
\newtheorem{definition}[theorem]{Definition}

\theoremstyle{remark} 
\newtheorem*{remark}{Remark}

\newcommand{\R}{{\mathbb R}}

\newcommand{\Q}{{\mathbb Q}}

\newcommand{\N}{{\mathbb N}}

\title{\huge Existence of Minimal Homotopies for Immersed Planar Curves}

\author{Lia Buchbinder\thanks{liya.boukhbinder@wsu.edu; corresponding author} \quad Yunjia Kou \quad Bala Krishnamoorthy \quad Kevin R.~Vixie\\
Washington State University}
\date{} 

\begin{document}
\maketitle

\begin{abstract}
We study the existence of area-minimizing homotopies between homotopic curves in the plane. 
While the classical Plateau problem establishes the
existence of least-area surfaces spanning a single Jordan curve, the corresponding existence theory for homotopies between curves is more subtle and is not directly covered by the same framework. 
Existing results in the plane are mainly based on combinatorial and algebraic methods, such as decomposing curves into self-overlapping subcurves. 
These methods are highly effective in the planar setting, but they are often tied to special classes of curves and rely strongly on the local structure of the self-intersections, frequently assuming transverse crossings. 
In contrast, our approach is geometric and variational, and does not depend on the local structure of the self-intersections.

In this paper, we develop a variational existence theory for minimum-area homotopies of immersed planar curves. 
Our approach adapts classical minimal surface methods by lifting an immersed planar curve with self-intersections into higher co-dimension, where it becomes embedded. 
For such a lifted curve, we apply Douglas’s solution of the Plateau problem to obtain an area-minimizing disk. 
For closed curves of class $C^1$, we prove uniform convergence of the Douglas minimizers and show that the limiting map minimizes area among all $C^1$ spanning maps of the original planar curve. 
We then extend the construction to closed Lipschitz curves using approximation and Sobolev compactness arguments.
Since the limiting minimizing disk lies in the original plane, it directly produces a null homotopy whose swept area is minimal among all admissible homotopies of the original curve. 
In this way, the construction connects Plateau theory with
minimal homotopy area minimization.
\end{abstract}

\section{Introduction} \label{sec:intro}
Given two open homotopic curves in the plane, a natural geometric problem is to understand whether there exists a homotopy between them that minimizes the total area swept during the deformation.
Such area-minimizing homotopies arise naturally in topology and geometric measure theory, and play an important role in applications ranging from curve similarity and shape matching to geometric optimization.
While Plateau’s problem asks whether there exists a surface of least area whose boundary coincides with a given closed, embedded curve, the corresponding existence problem for area-minimizing homotopies between curves is not resolved by the framework for Plateau's framework.

\subsection{Background} \label{subsec:backgrnd}
The existence problem for area-minimizing objects associated with curves has a long history, beginning with Plateau’s problem, which asks whether a least-area surface exists whose boundary coincides with a given closed curve in Euclidean space.
Douglas \cite{Do1931} and Radó \cite{Ra1930} independently proved that for any Jordan curve in $\R^{n}$ there exists a surface of least area spanning the curve.
Their work establishes existence of minimal disks parametrized by harmonic maps and provides the foundational analytic framework for minimal surface theory.

The problem of minimizing swept area under a homotopy between curves, however, is of a different nature, and existence results for such homotopies are not a direct consequence of the Douglas-Radó results.
For planar curves, existence results are known under additional structural assumptions. Early combinatorial ideas already appear in Blank work \cite{Sa1967}, who introduced algebraic word-based encodings of planar curves to study area-minimizing homotopies in a purely combinatorial framework.
Chambers and Wang \cite{ChWa2013} introduced minimum homotopy area as a measure of curve similarity for simple homotopic curves, focusing on its geometric properties and computation in settings where minimizing homotopies are assumed to exist.
An existence result was obtained by Nie \cite{Ni2014}, who studied minimum-area null-homotopies and homotopies between closed plane curves using a combinatorial and algebraic framework.
Assuming that the curves divide the plane into finitely many regions, he proved that a minimum-area homotopy exists within the class of piecewise smooth homotopies.
The proof relies on an algebraic encoding of planar curves rather than on variational compactness or analytic minimal surface methods.
Fasy, Karako\c{c}, and Wenk \cite{FaKaWe2017}, studied minimum null-homotopies of normal curves using a framework based on homotopy moves. 
They showed that a minimum null-homotopy can be constructed by decomposing the curve into a sequence of self-overlapping subcurves.
Each such subcurve bounds an immersed disk and can be contracted in turn, yielding a minimum-area homotopy for the original curve.
Evans and Wenk \cite{EvWe2023} investigated the relationship between minimum homotopy area and self-overlapping curves. 
They provided combinatorial characterizations of when a curve is self-overlapping and show how minimum homotopy area can be understood in terms of winding area and interior boundary structure. 
Chang, Fasy, McCoy, Millman, and Wenk \cite{ChFaMcMiWe2023} developed a combinatorial approach for computing minimum-area null-homotopy of normal curves. 
Encoding the curve as a word via Blank and Nie constructions, they used dynamic programming to compute the minimum homotopy area directly from the combinatorial representation.
All of the above approaches rely, in one form or another, on additional combinatorial or structural assumptions on the curves, such as transverse self-intersections, normality, or the existence of suitable self-overlapping decompositions. 

From a geometric measure theory perspective, existence of area-minimizing fillings is guaranteed in the sense of integral currents \cite{Fe1969}.
However, such minimizers may fail to correspond to a minimal continuous deformation of curves.
Currents allow cancellation through multiplicity and orientation.
For a simple illustration, consider two homotopic open curves with the same endpoints, where one curve traverses a planar region in one direction and the other traverses
the same region in the opposite direction. 
Concatenating the two curves produces a figure-eight type closed curve whose two lobes have opposite orientations.
If the two lobes bound the same planar region with opposite orientation, then as an integral current they cancel, and the mass of the minimizing current is zero 
In contrast, any continuous homotopy must sweep both lobes, and its area necessarily includes the area of each loop. Thus, current-theoretic minimizers notion of minimality does not always coincide with minimal homotopy area (See Figure \ref{fig:figure8_curves.png}). 

\begin{figure} [ht]
        \centering
    \includegraphics[width=0.75\linewidth]{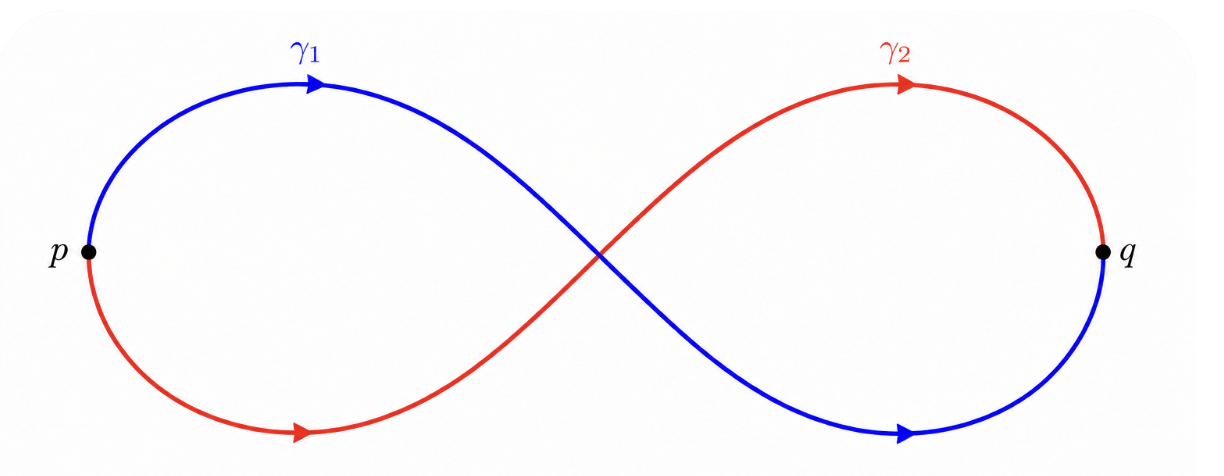}
    \caption{Two homotopic curves $\gamma_1$ (blue) and $\gamma_2$ (red) with the same endpoints $p$ and $q$. 
Their concatenation $\gamma=\gamma_1 * \gamma_2^{-1}$ forms a closed figure-eight curve whose two lobes have opposite orientations.}
    \label{fig:figure8_curves.png}
\end{figure}

White \cite{Wh1983} proves existence results for least-area mappings under embedding assumptions.
But a closed curve arising from two homotopic curves can have self-intersections.
As a result, existence theorems formulated for embedded boundaries do not directly apply to the homotopy area minimization problem.

\subsection{Our Contributions} \label{subsec:contrb}
The purpose of this paper is to provide a variational existence theory for minimum-area homotopies between immersed planar curves as described in Section~\ref{sec:settings}.
Our approach is geometric and variational, producing minimizing homotopies as limits of area-minimizing maps arising from Plateau-type problems. 
In particular, the method is independent of the local structure of the self-intersections and does not require the curve to be normal or to have transverse crossings.

We introduce a lifting construction that embeds an immersed planar curve into $\R^{4}$ by perturbing it in an orthogonal two-dimensional direction.
This separates self-intersections while remaining uniformly close to the original curve. 
Applying Douglas’s solution of the Plateau problem to the lifted curves produces a family of area-minimizing disks.
Using the classical three-point normalization together with the Courant-Lebesgue equicontinuity lemma, we prove uniform equicontinuity of the boundary parametrizations and obtain compactness of the corresponding
Douglas minimizers (Theorem~\ref{theo:unfrm-cnvg-Dmap}). 
This allows us to construct a limiting minimal disk spanning the original planar curve .
For closed curves of class $C^1$, we show in Proposition~\ref{prop:min-area} that the limiting map minimizes area among all $C^1$ spanning maps of the original planar curve.
Since the limiting map remains in $\R^2$, it directly gives a null homotopy whose swept area is minimal among all Lipschitz null homotopies of the original curve.
We then extend the construction to closed Lipschitz curves using approximation and Sobolev compactness arguments. 
In this setting, we prove weak convergence in $W^{1,2}$, strong convergence in $L^2$, identification of the boundary trace through a monotone parametrization, and continuity of the limiting map up to the boundary (see Theorem~\ref{theo:conv} and Proposition~\ref{prop:Minimality-of-the-Limit Surface}).

Conceptually, this provides a variational bridge between Plateau theory and homotopy area minimization.
The notation used throughout the paper is summarized in
Table \ref{tab:notation}.

\begin{table}[ht]
\centering
\caption{Notation used throughout the paper.}
\label{tab:notation}
\begin{tabular}{@{}p{0.22\textwidth} p{0.72\textwidth}@{}}
\hline
\textbf{Notation} & \textbf{Description} \\
\hline
$\R^{n}$ & $n$-dimensional Euclidean space with the standard metric \\

$\mathcal{H}^{k}$  & $k$-dimensional Hausdorff measure \\

$S^{1}$ & Unit circle in $\R^{2}$ with its standard orientation \\

$D^{2}$  & Unit disk in $\R^{2}$ \\

$\gamma : S^{1} \to \R^{2}$  & Closed immersed planar curve \\

$\gamma_\varepsilon \subset \R^{4}$   & Lifted curve obtained from $\gamma$ with parameter $\varepsilon>0$ \\

$\gamma_{0} \subset \R^{4}$  & Flattened curve $(\gamma,0,0)$ in $\R^{4}$ \\
    
$Lip(f)$  & Lipschitz constant of a map $f$ \\

$W^{1,2}(D^{2})$  & Sobolev space of maps with square-integrable weak derivatives \\

$H^{1/2}(\partial D^{2})$  & Fractional Sobolev trace space on $\partial D^{2}$ \\

$u_\varepsilon : D^{2} \to \R^{4}$ & Douglas area-minimizing disk spanning $\gamma_\varepsilon$ \\

$u_{0} : D^{2} \to \R^{4}$  & Uniform limit of $\{u_\varepsilon\}$ as $\varepsilon \to 0$ \\

$g_\varepsilon^{*}$  & Douglas boundary parametrization of $\gamma_\varepsilon$ \\

$\phi_\varepsilon$  & Monotone boundary reparametrization associated with $u_\varepsilon$ \\

$\phi_0$  & Limiting monotone boundary parametrization \\

$P:\R^{4}\to\R^{2}$  & Orthogonal projection onto the first $\R^{2}$ factor \\

$\widetilde{\mathscr{h}}$  & Lifted minimal homotopy before projection to $\R^{2}$ \\

$\mathscr{h}_0$  & Projected minimal homotopy map in $\R^{2}$ \\

$\mathscr{E}(u)$  & Dirichlet energy of a map $u$ \\

$J_u$  & Jacobian of a map $u$ \\

$\mathrm{Area}(u)$ & Parametric area of a map defined by the area formula \\

\hline
\end{tabular}
\end{table}

\section{Settings and Definitions} \label{sec:settings}

In this section, we describe definitions and assumptions
used throughout the paper.
We begin with homotopies of curves in the plane. 
\begin{definition}
Let $\gamma_1, \gamma_2 : [a,b] \to \R^2$ be two open curves that share the same endpoints. 
A \textit{homotopy} between them is a continuous map
$$
H : [0,1] \times [a,b] \to \R^2
$$
such that
$$
H(0,t)=\gamma_1(t), \qquad H(1,t)=\gamma_2(t),
$$
and the endpoints remain fixed throughout the deformation. 
If, in addition, each intermediate curve $H(s,\cdot)$ is injective, the homotopy is called an isotopy.
\end{definition}

For closed curves, we view them as continuous maps $\gamma : S^1 \to \R^2$. 

\begin{definition}\label{def:nullhomo}
A \textit{null homotopy} of a closed curve $\gamma$ is a homotopy between $\gamma$ and a constant curve. 
\end{definition}
Since $R^2$ is simply connected, every closed curve admits a null homotopy.

\begin{definition}
A map $f : S^1 \to \R^n$ is \textit{Lipschitz} if there exists $L \ge 0$ such that
$$
\|f(t_1) - f(t_2)\| \le L |t_1 - t_2|
\quad \text{for all } t_1,t_2,
$$
where $\|\cdot\|$ denotes the Euclidean norm in $\R^n$ and $|\cdot|$ denotes the Euclidean distance in the parameter domain.
\end{definition}
Every $C^1$ curve on the compact domain $S^1$ is automatically Lipschitz.

In this paper, we prove the existence of minimal homotopies minimizing area over Lipschitz competitors. 
This is the natural setting for our approach since the area formula and the energies applies directly to Lipschitz maps.

Also, We believe that an alternative formulation based on minimizing the two-dimensional Hausdorff measure of the continuous image of the disk may also produce minimizers with Lipschitz parameterizations. 

Next, we introduce the \textit{Sobolev space} $W^{1,2}$. 
\begin{definition}
The Sobolev space $W^{1,2}(U)$\cite{HeKoShTy2015} 
is the normed space of maps
$u \in L^2(U)$
whose weak gradient $\nabla u$ belongs to
$L^2(U)$, equipped with the norm
$$
\|u\|_{W^{1,2}(U)}
:=
\|u\|_{L^2(U)}
+
\|\nabla u\|_{L^2(U)}.
$$
\end{definition}

In what follows, we consider two open,  homotopic, $C^1$ curves 
$$
\gamma_1, \gamma_2 : [a,b] \to \R^2
$$
that share the same endpoints and whose first derivatives agree at those endpoints (after reversing the orientation of $\gamma_2$). 
Then the concatenation
$$
\gamma\equiv\gamma_1\cup\overline{\gamma_2}
$$
is a closed curve of class $C^1$; in particular, it defines an immersion of $S^1$.
We study null homotopies of this curve. 

\paragraph{The lifting construction.} \label{par:lift}
Now, we describe the geometric construction that allows us to remove self-intersections of $\gamma$ by lifting it into $\R^4$.
We work in ambient space $\R^2 \times \R^2 \cong \R^4$ and identify
$$
S^1 = \{ (\cos t, \sin t) : t \in [0,2\pi) \}.
$$
For $\varepsilon > 0$ we define the lifted curve
$$
\gamma_\varepsilon(t)
=
\big(\gamma(t), \varepsilon \cos t, \varepsilon \sin t \big)
\subset \R^4 .
$$
Thus, the original curve lies in the first $\R^2$, and the additional two coordinates provide a small orthogonal displacement in the second $\R^2$.
If $\gamma(t_1)=\gamma(t_2)$ with $t_1 \neq t_2$, then the last two coordinates satisfy
$(\cos t_1, \sin t_1) \neq (\cos t_2, \sin t_2)$,
so $\gamma_\varepsilon(t_1) \neq \gamma_\varepsilon(t_2)$.
Hence, for every $\varepsilon>0$, all self-intersections of $\gamma$ are separated in $\R^4$, and no new intersections are created.
Consequently, for every $\varepsilon>0$, the lifted curve $\gamma_\varepsilon$ is embedded in $\R^4$ (See Figure~\ref{fig:lift} for an illustration).

\begin{figure} [ht]
    \centering
    \includegraphics[width=0.95\linewidth]{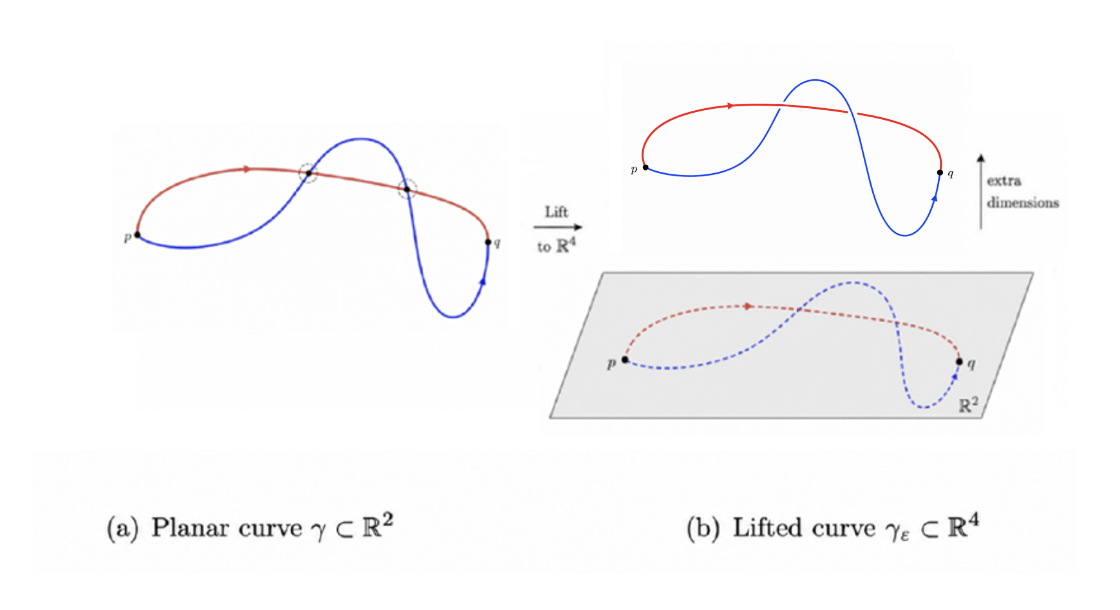}
    \caption{(a) A planar immersed curve $\gamma$ with self-intersections. 
    The circled regions indicate neighborhoods where the strands intersect in the planar projection. 
    (b) A lifted curve $\gamma_{\varepsilon}$ obtained by separating the intersecting in the additional dimensions, producing an embedded curve in $R^4$.
}
    \label{fig:lift}
\end{figure}

\paragraph{The three-point normalization.} \label{par:3pt}
We also use the \textit{three-point normalization} \cite{Pe2025} for the curves $\gamma_\varepsilon$. 
Boundary parametrizations of Douglas minimizers are not unique, since composing with a Möbius transformation of the disk changes the parametrization while preserving both the geometric image of the curve and the Dirichlet energy. 
Thus, without a normalization, the boundary maps may drift or degenerate.

Given $z_1,z_2,z_3\in\partial D^2$ and $w_1,w_2,w_3\in\partial D^2$ two triples in $\partial D^2$ taken with the same order, there exists a unique M\"obius transformation $\varphi:D^2\to D^2$ such that $\varphi(z_k)=w_k$ for $k=1,2,3$. 
Now, for any fixed $\varepsilon$, we select three distinct points $p_1,p_2,p_3\in \gamma_\varepsilon(S^1)$. 
Since the Dirichlet energy $\mathscr{E}$ satisfying $\mathscr{E}(f\circ\varphi)=\mathscr{E}(f)$ for $\varphi$ M\"obius, we can assume that each curve $\gamma_\varepsilon$ of the minimizing sequence satisfies
$$
\gamma_\varepsilon(e^{\frac{2k\pi}{3}i})=p_k,\ \ k=1,2,3.
$$
This normalization does not change the minimizing property, harmonicity, conformality, or energy of $u_\varepsilon$. 
It only stabilizes the way the boundary curve is parametrized.

\section{Main Results}
In this section, we present the main existence results of the paper.
We first establish the argument in the $C^1$ setting and then extend
the result to Lipschitz curves.

Before stating the main compactness result, we recall the classical existence result for Plateau's problem. 
If $\gamma \subset \R^n$ is a Jordan curve, then Douglas theorem there exists a harmonic and conformal map $u : D^2 \to \R^n$ that minimizes area among all maps spanning $\gamma$ and extends continuously to $\overline{D^2}$ with a parametrization $u|_{\partial D^2}$ of $\gamma$ \cite{Do1931}.

Since for each $\varepsilon>0$ the lifted curve $\gamma_\varepsilon \subset \R^4$ is a Jordan curve, the theorem applies to $\gamma_\varepsilon$. 
We therefore obtain a minimizing disk
$u_\varepsilon : D^2 \to \R^4$ spanning $\gamma_\varepsilon$.
We now study the behavior of this family as $\varepsilon \to 0$.

\subsection{Existence in the \texorpdfstring{$C^1$}{C1} Setting} \label{subsec:C1}
In this section, we will show the existence of the minimizer among all the Lipschitz competitors. We first prove a compactness result for the Douglas minimizers associated to the lifted curves. 
This will allow us to pass to a limiting disk as the
lift parameter tends to zero, which will later be used to construct a minimal null homotopy for the original curve.

\begin{theorem} \label{theo:unfrm-cnvg-Dmap}
Let $\gamma:S^{1}\to\R^{2}$ be a closed $C^1$ curve. 
Let $\gamma_{\varepsilon}\subset\R^{4}$ be the lifted Jordan curves defined in Section~2.1, with $\gamma_{0}(t)=(\gamma(t),0,0)$.
For each $0<\varepsilon\le1$, let $u_{\varepsilon}:D^{2}\to\R^4$ denote
a Douglas minimizer spanning $\gamma_{\varepsilon}$,
where $g_{\varepsilon}^{*}:S^{1}\to\gamma_{\varepsilon}$ denote the Douglas parametrizations.
Then there exists a sequence $\varepsilon_{j}\downarrow0$ and a continuous map $u_{0}:\overline{D^{2}}\to\mathbb{R}^{4}$ such that
\[
u_{\varepsilon_{j}} \longrightarrow u_{0}
\quad\text{uniformly on }\overline{D^{2}}.
\]
Moreover, the boundary trace of $u_{0}$ coincides with $g_{0}^{*}$, i.e.
\[
u_{0}\big|_{\partial D^{2}}=g_{0}^{*}.
\]
\end{theorem}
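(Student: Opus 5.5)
The plan is the classical compactness argument for solutions of Plateau's problem, carried out uniformly in $\varepsilon$: equicontinuity of the boundary traces, then the maximum principle to move it into the interior, then Arzel\`a--Ascoli.

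\textbf{Uniform energy bound.} First I would bound the Dirichlet energies independently of $\varepsilon$. Take the harmonic extension $h_\varepsilon$ of the fixed parametrization $t\mapsto\gamma_\varepsilon(t)$. Since $\gamma\in C^1$, this parametrization is Lipschitz with constant at most $\mathrm{Lip}(\gamma)+1$ for $\varepsilon\le 1$. Hence $\mathscr{E}(h_\varepsilon)\le C\,\|\gamma_\varepsilon\|_{H^{1/2}}^2\le C'$ uniformly in $\varepsilon$. By minimality and conformality of $u_\varepsilon$,
\[
\mathscr{E}(u_\varepsilon)=2\,\mathrm{Area}(u_\varepsilon)\le 2\,\mathrm{Area}(h_\varepsilon)\le \mathscr{E}(h_\varepsilon)\le C' =: K .
\]

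\textbf{Boundary equicontinuity.} Next I would prove that the traces $g^*_\varepsilon$ are equicontinuous. This is the main obstacle, because the classical argument uses that the boundary is a fixed Jordan curve, while here the curves $\gamma_\varepsilon$ degenerate to an immersed curve with self-intersections.
\begin{itemize}
\item[(i)] By the Courant--Lebesgue lemma, for every $w\in\partial D^2$ and $\delta<1$ there is $\rho\in(\delta,\sqrt\delta)$ such that the arc $u_\varepsilon(\partial B_\rho(w)\cap D^2)$ has length at most $\sqrt{8\pi K/\log(1/\delta)}=:\eta(\delta)$, uniformly in $\varepsilon$.
\item[(ii)] The endpoints of this arc are points $\gamma(t_1),\gamma(t_2)$, lifted, at distance at most $\eta$ in $\R^4$. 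The last two coordinates then give $\varepsilon|e^{it_1}-e^{it_2}|\le\eta$. This only controls $|t_1-t_2|$ when $\varepsilon\gtrsim\eta$, so it is not enough by itself.
\item[(iii)] The fix I would try is to use the monotone parametrization directly. Write $g^*_\varepsilon=\gamma_\varepsilon\circ\phi_\varepsilon$ with $\phi_\varepsilon:S^1\to S^1$ a monotone degree-one map satisfying the three-point normalization. By Helly's selection theorem, a subsequence has $\phi_{\varepsilon_j}\to\phi_0$ pointwise, with $\phi_0$ monotone.
\item[(iv)] The key claim is that $\phi_0$ has no jumps. Suppose it jumped at $w$. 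Then for small $\delta$ the short boundary arc around $w$ (cut off by the Courant--Lebesgue circle) is mapped onto a parameter interval $I$ of fixed length. Because the three normalization points lie outside, the complementary arc also covers a parameter interval of fixed length. The Courant--Lebesgue arc of length $\le\eta$ joins $\gamma_\varepsilon(t_1)$ to $\gamma_\varepsilon(t_2)$, with $t_1,t_2$ the ends of $I$.
\item[(v)] Projecting out the $\R^2$ part and using the lift factor gives only $\varepsilon|e^{it_1}-e^{it_2}|\le\eta$. To close the argument I would instead compare energies: $u_\varepsilon$ restricted to $B_\rho(w)\cap D^2$ must span a piece of $\gamma_\varepsilon$ that winds a fixed amount in the lift coordinates. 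The monotonicity/isoperimetric-type lower bound for the last two components, which are harmonic with boundary data $\varepsilon e^{i\phi_\varepsilon}$, then forces positive area. That area is on the order of $\varepsilon^2$, however, so I expect this step to need the structure of $\gamma$ itself rather than the lift.
\item[(vi)] My first attempt would therefore be: the arc $\gamma|_I$ nearly closes up, since $|\gamma(t_1)-\gamma(t_2)|\le\eta$. Replacing $u_\varepsilon$ on the small region by a cone over the short chord would lower the energy unless $\gamma|_I$ has almost no area, contradicting minimality. This yields $\phi_0$ continuous, and then, by Dini-type arguments for monotone maps, uniform convergence of $\phi_{\varepsilon_j}$.
\end{itemize}
Consequently $g^*_{\varepsilon_j}\to g_0^*:=\gamma_0\circ\phi_0$ uniformly.

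\textbf{Interior convergence and identification of the trace.} Each $u_\varepsilon$ is harmonic, so $u_{\varepsilon_j}-u_{\varepsilon_k}$ is harmonic with boundary values $g^*_{\varepsilon_j}-g^*_{\varepsilon_k}$. The maximum principle gives
\[
\sup_{\overline{D^2}}|u_{\varepsilon_j}-u_{\varepsilon_k}|\le\sup_{\partial D^2}|g^*_{\varepsilon_j}-g^*_{\varepsilon_k}|\to0 .
\]
Thus $(u_{\varepsilon_j})$ is uniformly Cauchy on $\overline{D^2}$. Its uniform limit $u_0$ is continuous and harmonic inside, and its boundary values are $\lim g^*_{\varepsilon_j}=g_0^*$, which proves both assertions of the theorem.
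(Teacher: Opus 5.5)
\textbf{Where the argument breaks.} Your energy bound and your closing step (maximum principle applied to $u_{\varepsilon_j}-u_{\varepsilon_k}$, then identification of the trace) are essentially Steps 1--4 of the paper. Everything hinges on equicontinuity of the traces $g^*_\varepsilon$. The paper takes this in one line from the Courant--Lebesgue lemma and the three-point condition. You are right in (ii) that it does not follow that way, since the modulus of continuity of $\gamma_\varepsilon^{-1}$ on $\gamma_\varepsilon(S^1)$ degenerates as $\varepsilon\to0$.

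Your repair in (iv)--(vi) fails, however. Courant--Lebesgue bounds only the length of the crosscut $u_\varepsilon(\partial B_\rho(w)\cap D^2)$. It says nothing about the energy of $u_\varepsilon$ on $B_\rho(w)\cap D^2$. That piece of surface spans the closed curve made of $\gamma_\varepsilon|_I$ together with the short crosscut. Consider exactly the situation of a jump: $\gamma|_I$ is a loop of $\gamma$ based at a double point, for instance a lobe of a figure eight. Then this closed curve has winding number $\pm1$ about the points inside the loop, so every surface spanning it has area at least roughly the area of that loop. A cone over the chord is not an admissible replacement, because it does not have boundary values $\gamma_\varepsilon|_I$. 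Any admissible replacement must still sweep the loop. So nothing contradicts minimality: the energy does not penalize compressing a whole lobe onto a shrinking boundary arc.

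\textbf{The claim is false in general.} The assertion that $\phi_0$ has no jumps fails for some curves, so no repair is possible without extra hypotheses on $\gamma$. Take the lemniscate $\gamma(t)=(\sin t,\sin t\cos t)$. It is a smooth immersion with one double point, and its two lobes have winding numbers $+1$ and $-1$. Suppose $u_{\varepsilon_j}\to u_0$ uniformly on $\overline{D^2}$.
\begin{itemize}
\item The convergence is $C^1_{\mathrm{loc}}$ in $D^2$, so $u_0$ is harmonic and weakly conformal there.
\item The last two components of $u_0$ vanish by the maximum principle.
\item The traces $g^*_{\varepsilon_j}$ converge uniformly, hence are equicontinuous. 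This excludes jumps of the Helly limit $\phi_0$, since a jump would force arbitrarily short boundary arcs to cover a sub-arc of $\gamma$ of fixed positive diameter.
\end{itemize}
Thus $u_0=(f,0,0)$ with $f|_{\partial D^2}=\gamma\circ\phi_0$ and $\phi_0$ monotone of degree one. Hence $f|_{\partial D^2}$ is nonconstant and has the same winding numbers $\pm1$ as $\gamma$.

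But if $f=(f_1,f_2)$ is harmonic and weakly conformal into $\R^2$, then $f_1+if_2$ is holomorphic or antiholomorphic. By the argument principle, the winding numbers of its boundary values about points off the curve then all have one sign, which is a contradiction.

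\textbf{Consequence.} For the lemniscate, no subsequence of Douglas minimizers converges uniformly on $\overline{D^2}$, whatever normalization is used. What actually happens is that one lobe bubbles off at a boundary point and $\phi_0$ jumps. So the step you need in (vi), and the equicontinuity asserted in Step 2 of the paper's proof, cannot be established for curves whose complementary regions carry winding numbers of both signs.
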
 

\begin{proof}
\textbf{Step 1: Uniform bound for $\{u_{\varepsilon}\}$.} 
Since $\gamma$ is continuous and the functions $\cos t$ and $\sin t$ are smooth, it follows that each lifted curve
$$
\gamma_\varepsilon:S^1\to\R^4
$$
is continuous.
Since $S^1$ is compact, $\gamma$ is bounded. Hence there exists a constant $M>0$ such that
$$
|\gamma(t)|\le M
\qquad\text{for all }t\in S^1.
$$
Recall that the lifted curves are defined by
$$
\gamma_\varepsilon(t)
=
(\gamma(t),\varepsilon\cos t,\varepsilon\sin t),
\qquad 0<\varepsilon\le1.
$$
Therefore,
$$
|\gamma_\varepsilon(t)|
\le
|\gamma(t)|+\varepsilon
\le
M+1,
$$
so
$$
\sup_{0<\varepsilon\le1}
\|\gamma_\varepsilon\|_{L^\infty(S^1)}
\le
M+1.
$$
Since the Douglas boundary parametrization has the form
$$
g_\varepsilon^*=\gamma_\varepsilon\circ\phi_\varepsilon,
$$
where $\phi_\varepsilon:S^1\to S^1$ is weakly monotone and continuous, it follows that $g_\varepsilon^*$ is continuous on $S^1$.
Also,
$$
g_\varepsilon^*(S^1)=\gamma_\varepsilon(S^1),
$$
the two maps have the same image. Consequently,
$$
\|g_\varepsilon^*\|_{L^\infty(S^1)}
=
\|\gamma_\varepsilon\|_{L^\infty(S^1)}.
$$
Hence,
$$
\sup_{0<\varepsilon\le1}
\|g_\varepsilon^*\|_{L^\infty(S^1)}
\le
M+1.
$$
Each coordinate of $u_\varepsilon$ is harmonic in $D^2$ and continuous on $\overline{D^2}$ with boundary values $g_\varepsilon^*$.
By the maximum principle \cite{Ev2022},
$$
\|u_\varepsilon\|_{L^\infty(\overline{D^2})}
\le
\|g_\varepsilon^*\|_{L^\infty(S^1)}.
$$
Therefore,
$$
\sup_{0<\varepsilon\le 1}\|u_\varepsilon\|_{L^\infty(\overline{D^2})}\le C.
$$

\noindent
\textbf{Step 2: Uniform convergence on the boundary.}
We first show that the family $\{u_\varepsilon\}_{0<\varepsilon\le1}$
has uniformly bounded Dirichlet energy.

For each $0<\varepsilon\le1$, let
$$
v_\varepsilon(re^{it})=r\gamma_\varepsilon(t),
\qquad 0\le r\le1,
$$
be the cone competitor spanning $\gamma_\varepsilon$.
Since $u_\varepsilon$ is a Douglas minimizer spanning $\gamma_\varepsilon$,
it minimizes the area energy, as well as Dirichlet energy, among all competitors with the same boundary values\cite{Pe2025}. 
Hence,
$$
\mathscr{E}(u_\varepsilon)\le \mathscr{E}(v_\varepsilon).
$$
Now,
$$
(v_\varepsilon)_r=\gamma_\varepsilon(t),
\qquad
(v_\varepsilon)_t=r\gamma_\varepsilon'(t).
$$
Therefore,
$$
|(v_\varepsilon)_r|^2+|(v_\varepsilon)_t|^2
=
|\gamma_\varepsilon(t)|^2+r^2|\gamma_\varepsilon'(t)|^2.
$$
Since
$$
\gamma_\varepsilon(t)
=
(\gamma(t),\varepsilon\cos t,\varepsilon\sin t),
$$
we have
$$
|\gamma_\varepsilon(t)|
\le
|\gamma(t)|+1,
$$
and
$$
|\gamma_\varepsilon'(t)|^2
=
|\gamma'(t)|^2+\varepsilon^2.
$$
Because $\gamma\in C^1(S^1)$ and $S^1$ is compact, both $\gamma$ and $\gamma'$ are bounded. Hence there exists a constant $C>0$,
independent of $\varepsilon$, such that
$$
\mathscr{E}(v_\varepsilon)\le C.
$$
Consequently,
$$
\mathscr{E}(u_\varepsilon)\le \mathscr{E}(v_\varepsilon)\le C,
$$
for all $0<\varepsilon\le1$.

By the previous energy estimate, there exists a constant $C>0$, independent of
$\varepsilon$, such that
$
\mathscr{E}(u_\varepsilon)\le C
$
for all $0<\varepsilon\le 1$.

Recall that a family of functions $\mathcal{F}$ on a metric space is called equicontinuous if for every $\eta>0$ there exists $\delta>0$ such that
\[
|x-y|<\delta \quad \Rightarrow \quad |f(x)-f(y)|<\eta
\]
for all $f\in\mathcal{F}$ \cite{Ru1976}. 

As we explained in section~\ref{sec:settings}, we may assume that the family $\{u_\varepsilon\}$ satisfies the three-point condition. 
That is, for three fixed distinct points $e^{i\theta_1}, e^{i\theta_2}, e^{i\theta_3} \in \partial D^2$ and three fixed distinct points $p_1, p_2, p_3 \in S^1$ such that $\gamma(p_1), \gamma(p_2), \gamma(p_3)$ are also distinct, it is valid that 
$$g_\epsilon^*(e^{i\theta_k})=\gamma_\epsilon(p_k),\ \ k=1,2,3$$ for every $\epsilon$. From the Courant-Lebesgue Lemma \cite{Pe2025},  
$$
|z-w|<\delta
\quad\Longrightarrow\quad
|g_\varepsilon^*(z)-g_\varepsilon^*(w)|<\eta
$$
on $\partial D^2$. 
Therefore, the family $\{g_\varepsilon^*\}_{0<\varepsilon\le1}$ is
equicontinuous on $\partial D^2$.

Since $\partial D^2$ is compact and the family $\{g_\varepsilon^*\}$ is uniformly bounded and equicontinuous, the Arzelà–Ascoli theorem \cite{Ru1976} implies that there exists a subsequence $\varepsilon_j \downarrow 0$ and a
continuous map
$$
g_0 : \partial D^2 \to \R^4
$$
such that
$$
g_{\varepsilon_j}^* \to g_0^*
\quad \text{uniformly on } \partial D^2.
$$

\noindent
\textbf{Step 3: Uniform convergence of $\{u_\varepsilon\}$}
Each coordinate of $u_\varepsilon$ is harmonic in $D^2$ and continuous on $\overline{D^2}$ with boundary values $\gamma_\varepsilon$.
By the maximum principle \cite{Ev2022},
$$
\|u_{\varepsilon_j}-u_{\varepsilon_k}\|_{L^\infty(\overline{D^2})}
\le
\|g_{\varepsilon_j}^*-g_{\varepsilon_k}^*\|_{L^\infty(\partial D^2)}.
$$
Since
$
g_{\varepsilon_j}^*\to g_0^*
$
uniformly on $\partial D^2$, the sequence
$
\{g_{\varepsilon_j}^*\}
$
is Cauchy in $L^\infty(\partial D^2)$.
Therefore,
$
\{u_{\varepsilon_j}\}
$
is Cauchy in $L^\infty(\overline{D^2})$.
Since $C^0(\overline{D^2})$ is complete, there exists a continuous map
$
u_0:\overline{D^2}\to\mathbb R^4
$
such that
$$
u_{\varepsilon_j}\to u_0
\quad\text{uniformly on }\overline{D^2}.
$$

Since each $u_{\varepsilon_j}$ is continuous on $\overline{D^2}$ and
$
u_{\varepsilon_j}\to u_0
$
uniformly on $\overline{D^2}$, the limit map $u_0$ is continuous on $\overline{D^2}$.

\noindent
\textbf{Step 4: identification of the boundary.}
For each $j$,
$
u_{\varepsilon_j}|_{\partial D^2}=g_{\varepsilon_j}^*.
$
Since
$
u_{\varepsilon_j}\to u_0
$
uniformly on $\overline{D^2}$, restricting to the boundary gives
$
u_{\varepsilon_j}|_{\partial D^2}\to u_0|_{\partial D^2}
$
uniformly on $\partial D^2$. Hence,
$
g_{\varepsilon_j}^*\to u_0|_{\partial D^2}
$
uniformly on $\partial D^2$.
On the other hand, from Step 2 we already know that
$
g_{\varepsilon_j}^*\to g_0^*
$
uniformly on $\partial D^2$.
By uniqueness of uniform limits,
\[
u_0|_{\partial D^2}=g_0^*.
\]
\end{proof}

Next, we prove that the limit map $u_0$ minimizes area among all Lipschitz maps spanning $g_0^{*}$.

\begin{proposition} \label{prop:min-area}
Let $v : \overline{D^2} \to \R^4$ be a Lipschitz map with $v|_{\partial D^2} = g_0^{*}$.
We define area by the area formula \cite{Fe1969}
\[
\operatorname{Area}(v) = \int_{D^2} J_v(x)\, d \mathcal{H}^2(x),
\qquad
J_v = \sqrt{\det(Dv^T Dv)}.
\]
The limit map $u_0$ from Theorem~\ref{theo:unfrm-cnvg-Dmap} minimizes area among all Lipschitz maps
$v:\overline{D^2}\to\R^4$
with
$
v|_{\partial D^2}=g_0^*.
$
\end{proposition}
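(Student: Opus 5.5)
We prove the minimality by comparison with the approximating problems: each Lipschitz competitor $v$ for $g_0^*$ is turned into a competitor for $\gamma_{\varepsilon_j}$, the Douglas minimality of $u_{\varepsilon_j}$ is applied, and then $j\to\infty$. Since Area is invariant under reparametrization of the domain while Dirichlet energy is not, we work throughout with the Dirichlet energy, using $\operatorname{Area}(w)\le\mathscr{E}(w)$ with equality for conformal maps. We argue by contradiction: suppose some Lipschitz $v$ with $v|_{\partial D^2}=g_0^*$ satisfies $\operatorname{Area}(v)<\operatorname{Area}(u_0)-3\eta$ for some $\eta>0$.

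\textbf{Step 1 (lower semicontinuity).} Since $\mathscr{E}(u_{\varepsilon_j})\le C$ by Step 2 of Theorem~\ref{theo:unfrm-cnvg-Dmap} and $u_{\varepsilon_j}\to u_0$ uniformly, a further subsequence converges weakly in $W^{1,2}$. Hence $u_0\in W^{1,2}$ and
\[
\operatorname{Area}(u_0)\le\mathscr{E}(u_0)\le\liminf_j \mathscr{E}(u_{\varepsilon_j})=\liminf_j\operatorname{Area}(u_{\varepsilon_j}),
\]
where the last equality uses the conformality of the Douglas minimizers.

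\textbf{Step 2 (approximating the competitor).} Replace $v$ by an almost conformal competitor. By the Morrey $\varepsilon$-conformality lemma, or by solving the Plateau/Douglas problem for the boundary $g_0^*$ in a class containing $v$, we obtain $\tilde v$ with the same boundary values and $\mathscr{E}(\tilde v)\le \operatorname{Area}(v)+\eta$.

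\textbf{Step 3 (boundary collar).} Glue $\tilde v$ to $g_{\varepsilon_j}^*$ through a thin annulus. On $\{1\le |z|\le 1+\rho\}$, rescaled back to the unit disk, define the linear interpolation
\[
w_j(re^{it})=\tfrac{1+\rho-r}{\rho}\,g_{\varepsilon_j}^*(e^{it})+\tfrac{r-1}{\rho}\,g_0^*(e^{it}).
\]
Its energy is bounded by
\[
C\Bigl(\rho\bigl(\|(g_{\varepsilon_j}^*)'\|^2+\|(g_0^*)'\|^2\bigr)+\rho^{-1}\|g_{\varepsilon_j}^*-g_0^*\|_\infty^2\Bigr).
\]
This is the main obstacle. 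The Douglas parametrizations are not Lipschitz uniformly in $j$; the Courant--Lebesgue lemma only gives equicontinuity, so the tangential term is uncontrolled.

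To overcome it, interpolate in the $\gamma$-parameter rather than the disk parameter. Write $g_{\varepsilon}^*=\gamma_{\varepsilon}\circ\phi_{\varepsilon}$ and note that $\gamma_{\varepsilon}-\gamma_0=(0,\varepsilon\cos,\varepsilon\sin)$ is an explicit $C^1$ map that is $O(\varepsilon)$ in $C^1$. The collar therefore becomes
\[
(r,t)\mapsto \gamma_0(\phi_j(t))+s(r)\,\bigl(0,\varepsilon_j\cos\phi_j(t),\varepsilon_j\sin\phi_j(t)\bigr),
\]
glued to a reparametrization of $\tilde v$ whose boundary is $\gamma_0\circ\phi_j$. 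Its area, which is reparametrization invariant, is at most $\varepsilon_j\cdot\mathrm{length}(\gamma)$ up to a constant, and so tends to $0$. The remaining task is to match $\gamma_0\circ\phi_j$ with $g_0^*=\gamma_0\circ\phi_0$. This can be done with a thin annulus on which $\gamma_0\circ\bigl((1-s)\phi_j+s\phi_0\bigr)$ is used, the interpolation being taken in lifted angles. Its area is bounded by the area swept by $\gamma$ along short arcs, which tends to $0$ because $\phi_j\to\phi_0$ uniformly and $\gamma$ is $C^1$. Since area is invariant under reparametrization and the competitor can be reparametrized conformally, the Douglas minimality of $u_{\varepsilon_j}$ (minimizing area among spanning disks) applies to the glued competitor.

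\textbf{Step 4 (conclusion).} Combining the steps gives
\[
\operatorname{Area}(u_{\varepsilon_j})\le\operatorname{Area}(v)+\eta+o(1).
\]
Taking $\liminf$ and using Step 1 yields
\[
\operatorname{Area}(u_0)\le\operatorname{Area}(v)+\eta,
\]
which contradicts the assumption $\operatorname{Area}(v)<\operatorname{Area}(u_0)-3\eta$.
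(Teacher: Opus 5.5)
Your argument follows the paper's skeleton: turn $v$ into a competitor for $\gamma_{\varepsilon_j}$, invoke Douglas minimality, pass to the limit. But you carry out both substantive steps differently, and after the repairs below your version is the more defensible one.

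On the $u_{\varepsilon_j}$ side, the paper asserts a uniform $C^1(\overline{D^2})$ bound and deduces $\operatorname{Area}(u_{\varepsilon_j})\to\operatorname{Area}(u_0)$. That bound is never proved, and it cannot be expected for merely $C^1$ boundary curves. Your Step 1 needs only the energy bound from Theorem~\ref{theo:unfrm-cnvg-Dmap}, weak lower semicontinuity, and conformality of the $u_{\varepsilon_j}$. Lower semicontinuity is all the argument requires, and it is the same mechanism the paper later uses for Lipschitz curves.

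On the competitor side, the paper uses the straight collar $(1-s)v((1-\delta)e^{it})+s\gamma_\varepsilon(t)$. Its chords join $\approx\gamma_0(\phi_0(t))$ to $\gamma_\varepsilon(t)$, so its area is not small in general when $\phi_0\neq\mathrm{id}$. Your idea of following the curve in the $\gamma$-parameter and lifting only in the normal $\R^2$ directions is what makes the collar area genuinely $O(\varepsilon_j)$.

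Two parts of your write-up need repair.

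\textbf{Step 2.} It is false as stated and should be deleted. With boundary values held fixed, $\mathscr{E}$ is bounded below by the energy of the harmonic extension of $g_0^*$, which need not be close to $\operatorname{Area}(v)$. Morrey's lemma gives $\mathscr{E}\le\operatorname{Area}+\eta$ only after reparametrizing the boundary. A Plateau problem for the non-Jordan curve $\gamma_0$ is exactly what the paper is trying to solve, so you cannot invoke it. Since from Step 3 on you compare areas, simply use $v$ itself.

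\textbf{The glued competitor.} Insisting that the outer boundary be exactly $g^*_{\varepsilon_j}=\gamma_{\varepsilon_j}\circ\phi_j$ creates an admissibility problem. $\phi_j$ is only known to be a monotone homeomorphism, so your collars need not lie in $W^{1,2}$. The claim that ``the competitor can be reparametrized conformally'' is not a standard fact for such maps. The matching is also unnecessary: $u_{\varepsilon_j}$ minimizes area among all $W^{1,2}$ disks whose trace is any weakly monotone parametrization of $\gamma_{\varepsilon_j}$. So put $v(x/(1-\delta))$ on $|x|\le 1-\delta$, and on the collar use
\[
w_j(s,t)=g_0^*(e^{it})+s\,\varepsilon_j\bigl(0,0,\cos\phi_0(t),\sin\phi_0(t)\bigr),\qquad 0\le s\le 1.
\]
Its outer boundary is $\gamma_{\varepsilon_j}\circ\phi_0$, and its area is at most $2\pi\varepsilon_j\bigl(\mathrm{Lip}(v)+\varepsilon_j\,\mathrm{Lip}(\phi_0)\bigr)$.

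This requires $\phi_0$ to be Lipschitz, which you must verify:
\begin{itemize}
\item $\phi_0$ is continuous. A jump would force $g^*_{\varepsilon_j}$ to cover a nondegenerate arc of $\gamma_{\varepsilon_j}$ on arbitrarily short arcs of $\partial D^2$, contradicting the Courant--Lebesgue equicontinuity.
\item $\phi_0$ is then Lipschitz, because $\gamma_0\circ\phi_0=v|_{\partial D^2}$ is Lipschitz and a $C^1$ immersion is locally bi-Lipschitz.
\end{itemize}

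With this, several pieces of your proposal drop out: the annulus matching $\phi_j$ to $\phi_0$, the unproved uniform convergence $\phi_j\to\phi_0$, and the argument by contradiction. You get $\operatorname{Area}(u_{\varepsilon_j})\le\operatorname{Area}(v)+C\varepsilon_j$ directly, and your Step 1 then gives $\operatorname{Area}(u_0)\le\operatorname{Area}(v)$.
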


\begin{proof} \textbf{Step 1: Boundary adjustment.}
Fix $\delta \in (0,1/2)$ and write $x = re^{it}$.
Define $v_{\varepsilon,\delta}$ by
\[
v_{\varepsilon,\delta}(re^{it}) =
\begin{cases}
v(re^{it}), & 0 \le r \le 1-\delta, \\[2mm]
(1-s)\, v((1-\delta)e^{it}) + s\, \gamma_\varepsilon(t),
& 1-\delta \le r \le 1,
\end{cases}
\qquad
s = \dfrac{r-(1-\delta)}{\delta}.
\]
Then
\[
v_{\varepsilon,\delta}|_{\partial D^{2}}=g_\varepsilon^{*},
\qquad
v_{\varepsilon,\delta}=v \quad \text{on } |x|\le1-\delta.
\]

Since $g_\varepsilon^{*}\to g_0^{*}$ uniformly on $S^{1}$ and $v$ is continuous on $\overline{D^{2}}$, choosing $\delta=\sqrt{\varepsilon}$ gives
\[
\operatorname{Area}(v_{\varepsilon,\delta})
=
\operatorname{Area}(v)+o(1)
\qquad (\varepsilon\to0).
\]

For simplicity, we write
\[
v_\varepsilon=v_{\varepsilon,\delta(\varepsilon)}.
\]

\noindent
\paragraph{Step 2: Comparison.}
Since $u_\varepsilon$ minimizes area among maps spanning
$g_0*$,
\[
\operatorname{Area}(u_\varepsilon)
\le
\operatorname{Area}(v_\varepsilon).
\]
Hence,
\[
\limsup_{\varepsilon\to0} \operatorname{Area}(u_\varepsilon)
\le
\limsup_{\varepsilon\to0} \operatorname{Area}(v_\varepsilon)
\le
\operatorname{Area}(v).
\]

\noindent
\paragraph{Step 3: Continuity of area.}
Since $\{u_\varepsilon\}$ is uniformly bounded in $C^1(\overline{D^2})$,
after passing to a subsequence from Theorem~\ref{theo:unfrm-cnvg-Dmap}, we have
$u_{\varepsilon_j} \to u_0$ in $C^{1}(\overline{D^2})$.
Hence $Du_{\varepsilon_j} \to Du_0$ uniformly.
Since $A \mapsto \sqrt{\det(A^T A)}$ is continuous,
\[
\operatorname{Area}(u_{\varepsilon_j})
\to
\operatorname{Area}(u_0).
\]
In particular,
\[
Area(u_0)
=
\lim_{j\to\infty} Area(u_{\varepsilon_j})
\le
\liminf_{\varepsilon\to0} Area(u_\varepsilon).
\]

\noindent
\paragraph{Step 4: Conclusion.}
Combining the inequalities,
\[
\operatorname{Area}(u_0)
\le
\operatorname{Area}(v).
\]
Since $v$ was arbitrary, $u_0$ minimizes area among all maps
spanning $g_0^{*}$.
\end{proof}

Let $\gamma:S^1\to\R^2$ be our original curve and view $\gamma_0(t)=(\gamma(t),0,0)\in\R^4$.
Let $u_0:\overline{D^{2}}\to\R^{4}$ be the area minimizer that spans $g_0^*=u_0|_{\partial D^2}$, where $\phi_0:S^1\to S^1$ is weakly monotone map such that $g_0^*=\gamma_0\circ\phi_0$. 

Since
\[
u_0|_{\partial D^2}=g_0^*=(\gamma\circ\phi_0,0,0),
\]
the third and fourth coordinate functions of $u_0$ are harmonic on $D^2$
with zero boundary values. By the maximum principle, they vanish identically.
Hence,
\[
u_0(D^2)\subset \mathbb{R}^2\times\{(0,0)\}.
\]
Therefore, identifying $\mathbb{R}^2\times\{(0,0)\}$ with $\mathbb{R}^2$, the map
$u_0$ itself minimizes area among all Lipschitz maps spanning $\gamma\circ\phi_0$.

\begin{remark}
The minimization is carried out among competitors
whose boundary parametrization agrees with $g_0^*$. 
This does not affect the homotopy area problem, since reparametrizing the boundary curve changes only the parametrization of the homotopy and not its geometric image or swept area. 
In the Subsection~\ref{subsec:constrct-minhom} we show how to recover a minimizing homotopy for the original curve $\gamma$.
\end{remark}

\subsection{Generalization to Lipschitz curves} \label{subsec:gnrlztion}
In this subsection, we extend the previous result in Theorem~\ref{theo:unfrm-cnvg-Dmap} to arbitrary closed Lipschitz curves.
Although the argument becomes more intricate, this additional complexity allows us to establish the existence of a minimal homotopy under weaker regularity assumptions.

We work in the Sobolev space $W^{1,2}(D^2,\R^4)$ and study limits of Douglas minimizers associated with the lifted curves $\gamma_\varepsilon$.


Before proving the main theorem of this section, we first establish the following Lemma for monotone boundary parametrizations.

\begin{lemma} \label{lem:mono}
For a family of monotone function $\{\phi_\varepsilon:[0,2\pi)\to [0,2\pi]\}$, there exists a subsequence $\{\phi_{\varepsilon_j}\}$ and a monotone function $\phi_0$ such that
$$
\phi_{\varepsilon_j}\xrightarrow{j\to\infty}\phi_0\ \ \text{pointwise}.
$$
\end{lemma}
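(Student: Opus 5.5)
This is Helly's selection theorem, and the plan is to prove it by a diagonal argument over a countable dense set followed by a monotonicity squeeze. Throughout, I read ``family'' as a sequence indexed by $\varepsilon\downarrow 0$ (for instance $\varepsilon=1/n$), and I take monotone to mean non-decreasing; the non-increasing case is identical after replacing $\phi$ by $2\pi-\phi$. All the functions take values in the compact interval $[0,2\pi]$, so the family is uniformly bounded, and this is the only input besides monotonicity.

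\textbf{Step 1 (diagonal extraction on rationals).} Enumerate $\Q\cap[0,2\pi)=\{q_1,q_2,\dots\}$. Since $\{\phi_\varepsilon(q_1)\}\subset[0,2\pi]$, Bolzano--Weierstrass gives a convergent subsequence. Inductively I refine it at $q_2,q_3,\dots$, and I take the diagonal subsequence $\phi_{\varepsilon_j}$. Then $\psi(q):=\lim_j\phi_{\varepsilon_j}(q)$ exists for every rational $q$. It is non-decreasing on the rationals, since limits preserve the inequalities $\phi_{\varepsilon_j}(q)\le\phi_{\varepsilon_j}(q')$ for $q<q'$.

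\textbf{Step 2 (extension and convergence at continuity points).} Define $\tilde\psi(x):=\sup\{\psi(q):q\in\Q,\ q\le x\}$ for $x\in[0,2\pi)$. This function is non-decreasing and takes values in $[0,2\pi]$. Let $x$ be a point of continuity of $\tilde\psi$, and take rationals $q<x<q'$ with $\tilde\psi(q')-\tilde\psi(q)$ small. Monotonicity gives
\[
\phi_{\varepsilon_j}(q)\le\phi_{\varepsilon_j}(x)\le\phi_{\varepsilon_j}(q').
\]
Passing to $\liminf$ and $\limsup$ squeezes $\lim_j\phi_{\varepsilon_j}(x)=\tilde\psi(x)$. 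At $x=0$, which is rational, convergence already holds.

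\textbf{Step 3 (discontinuities).} The monotone function $\tilde\psi$ has at most countably many discontinuity points $\{x_k\}$. At each $x_k$ the values $\phi_{\varepsilon_j}(x_k)$ are bounded, so a further diagonal extraction makes $\phi_{\varepsilon_j}(x_k)$ converge for all $k$ simultaneously. Passing to a further subsequence preserves the convergence already obtained in Step 2. I then set $\phi_0(x)$ equal to the limit at every point. This $\phi_0$ is monotone because pointwise limits of non-decreasing functions are non-decreasing.

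The main obstacle is this third step: making sure the pointwise limit exists everywhere, not just almost everywhere. It is handled by the countability of the jump set, and the key point is that every extraction is from a countable set, so a single diagonal subsequence works. If the paper intends $\phi_\varepsilon$ as circle maps of degree one, the same argument applies to their lifts to $[0,2\pi)$.
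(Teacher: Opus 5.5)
Your proposal is correct and follows essentially the same route as the paper. Both arguments use a diagonal extraction on $\Q\cap[0,2\pi)$, extension of the limit by $\sup_{q\le x}$, a monotonicity squeeze at continuity points, and a second diagonal extraction on the countable discontinuity set; your remarks on the endpoint $x=0$ and on reading the family as a sequence are small clarifications the paper leaves implicit.
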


\begin{proof} \textbf{Step 1: Extracting a subsequence on the rational points}. 
WLOG, we assume that $\phi_\varepsilon$ is increasing for all $\varepsilon$. 
Let $\{q_k\}_{k=1}^\infty$ be an enumeration of the rational numbers $\Q\cap[0,2\pi)$. 
Since $\{\phi_{\varepsilon}$\} is uniformly bounded, the sequence $\{\phi_\varepsilon(q_1)\}$ is bounded. 

By the Bolzano-Weierstrass theorem \cite{Ru1976}, we can select a convergent subsequence, denoted by $\{\phi_{\varepsilon^{(1)}_n}\}$ (with indices $\varepsilon^{(1)}_n\in N_1$).
For $q_2$, we can extract a further convergent subsequence $\{\phi_{\varepsilon^{(2)}_n}\}$ (with indices in $\varepsilon^{(2)}_n\in N_2 \subset N_1$). 
Repeating this procedure yields a nested sequence of index sets $N_1 \supset N_2 \supset \cdots$.

Using the diagonal method, we define the subsequence by choosing
$$
\{\phi_{\varepsilon_m}\}:=\{\phi_{\varepsilon^{(m)}_m}\},\ \varepsilon_m \in N_m.
$$
This diagonal subsequence converges at every rational point $q_k$. 
We denote the limit by:
$$
\varphi_0(q_k) = \lim_{m\to \infty}\phi_{\varepsilon_m}(q_k), \quad k=1,2,\dots
$$
Notice that pointwise limits preserve monotonicity, so $\varphi_0$ is increasing on $\Q\cap[0,2\pi)$.

\noindent
\paragraph{Step 2: The limit function behaves well at continuity points.}
We extend $\varphi_0$ to a function $\phi_0$ defined on the entire interval $[0,2\pi)$ by setting
$$
\phi_0(x) = \sup_{q_k \le x}\varphi_0(q_k). 
$$
Thus, $\phi_0$ is also increasing on $[0,2\pi)$. 
We claim that at every continuity point $x$ of $\phi_0$, 
$$
\lim_{m\to\infty}\phi_{\varepsilon_m}(x) = \phi_0(x). 
$$
Indeed, let $x \in [0,2\pi)$ be a point of continuity of $\phi_0$, for any $\delta>0$, there exist rational points $p, q \in \Q\cap[0,2\pi)$ s.t. $p < x < q$ and
$$
\phi_0(q) - \phi_0(p) < \delta. 
$$
By the monotonicity of $\phi_{\varepsilon_m}$, we have
$$
\phi_{\varepsilon_m}(p) \leq \phi_{\varepsilon_m}(x) \leq \phi_{\varepsilon_m}(q), \  \forall m\in \N. 
$$
Let $m\to\infty$, we obtain
$$
\varphi_0(p) = \liminf_{m\to\infty}\phi_{\varepsilon_m}(p) \leq \liminf_{m\to\infty}\phi_{\varepsilon_m}(x) \leq \limsup_{m\to\infty}\phi_{\varepsilon_m}(x) \leq \limsup_{m\to\infty}\phi_{\varepsilon_m}(q) = \varphi_0(q).
$$
Therefore, letting $\delta \to 0^+$, we conclude that the limit exists and
$$
\lim_{m\to\infty}\phi_{\varepsilon_m}(x) = \phi_0(x). 
$$

\noindent
\paragraph{Step 3: Dealing with potential discontinuity points.}
Since $\phi_0$ is a monotone function, its set of discontinuity points, denoted by $C = \{c_k\}_{k=1}^\infty$, is at most countable. 
We apply the diagonal argument (as in step 1) to the sequence $\{\phi_{\varepsilon_m}\}$ on the countable set $C$. 
This yields a further subsequence, denoted by $\{\phi_{\varepsilon_j}\}:=\{\phi_{\varepsilon_{j_m}}\}$, that converges pointwise on $C$. 

Hence, we redefine the values of $\phi_0$ on $C$ as
$$
\phi_0(y) := \lim_{j\to\infty}\phi_{\epsilon_j}(y), \quad y \in C. 
$$
This ensures that the subsequence $\{\phi_{\varepsilon_j}\}$ converges to $\phi_0$ pointwise everywhere on $[0,2\pi)$, and $\phi_0$ remains a monotone function.
\end{proof}

\begin{theorem} \label{theo:conv} 
Let $\gamma:S^1\to\R^2$ be a closed Lipschitz curve. 
Let $\gamma_\varepsilon\subset\R^4$ be the lifted Jordan curves defined in Section~\ref{sec:settings}, with $\gamma_0(t)=(\gamma(t),0,0)$. 
For each $0<\varepsilon\leq1$, let $u_\varepsilon:D^2\to\R^4$ denote a Douglas minimizer spanning $\gamma_\varepsilon$. 

Then there exists a sequence $\varepsilon_j\downarrow0$ and a map $u_0:\overline{D^2}\to\R^4$ belonging to $W^{1,2}(D^2)\cap C(\overline{D^2})$, such that 
$$
u_{\varepsilon_j}\rightharpoonup u_0 \ \text{weakly in } W^{1,2}(D^2), 
\ \text{strongly in } L^2(D^2),
$$
$$
u_{\varepsilon_j}\to u_0 \ \text{uniformly on }\forall K\subset\subset D^2. 
$$
Moreover, the boundary trace of $u_0$ satisfies
$$
u_0|_{\partial D^2}=\gamma_0\circ\phi_0,
$$
where $\phi_0:S^1\to S^1$ is a monotone parametrization.
\end{theorem}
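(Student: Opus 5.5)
The proof follows the $C^1$ case, except that uniform convergence on $\overline{D^2}$ is replaced by Sobolev compactness, and the boundary is identified through Lemma~\ref{lem:mono}.

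\textbf{Energy bound.} First I bound the Dirichlet energy uniformly in $\varepsilon$. I take the cone competitor $v_\varepsilon(re^{it})=r\gamma_\varepsilon(t)$ as in Step~2 of Theorem~\ref{theo:unfrm-cnvg-Dmap}. For Lipschitz $\gamma$, $\gamma'$ exists a.e. and $|\gamma'|\le Lip(\gamma)$, so
\[
\mathscr{E}(v_\varepsilon)\le C\bigl(\|\gamma\|_\infty^2+1+Lip(\gamma)^2+1\bigr).
\]
Douglas minimality then gives $\mathscr{E}(u_\varepsilon)\le C$. The maximum principle gives $\|u_\varepsilon\|_{L^\infty}\le M+1$. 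Hence $\{u_\varepsilon\}$ is bounded in $W^{1,2}(D^2,\R^4)$.

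\textbf{Interior convergence.} Banach--Alaoglu and Rellich give a subsequence with $u_{\varepsilon_j}\rightharpoonup u_0$ weakly in $W^{1,2}$ and strongly in $L^2$. Each coordinate is harmonic and uniformly bounded, so interior gradient estimates give uniform $C^1$ bounds on every $K\subset\subset D^2$. Arzel\`a--Ascoli and a diagonal argument over an exhaustion then give local uniform convergence. In particular $u_0$ is harmonic in $D^2$.

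\textbf{Boundary.} Write $g^*_\varepsilon=\gamma_\varepsilon\circ\phi_\varepsilon$ with $\phi_\varepsilon$ weakly monotone, normalized by the three-point condition. Lemma~\ref{lem:mono} gives a further subsequence with $\phi_{\varepsilon_j}\to\phi_0$ pointwise, where $\phi_0$ is monotone. Since $\gamma$ is Lipschitz and $\varepsilon_j\to0$,
\[
|\gamma_{\varepsilon_j}(\phi_{\varepsilon_j}(t))-\gamma_0(\phi_0(t))|\le Lip(\gamma)\,|\phi_{\varepsilon_j}(t)-\phi_0(t)|+\varepsilon_j\to0
\]
for every $t$. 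Dominated convergence then gives $g^*_{\varepsilon_j}\to\gamma_0\circ\phi_0$ in $L^2(\partial D^2)$. The trace operator $W^{1,2}(D^2)\to L^2(\partial D^2)$ is compact (it factors through $H^{1/2}$), so the traces of $u_{\varepsilon_j}$ converge in $L^2(\partial D^2)$ to the trace of $u_0$. Uniqueness of limits yields $u_0|_{\partial D^2}=\gamma_0\circ\phi_0$ in the trace sense.

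\textbf{Main obstacle: continuity up to the boundary.} A priori $\phi_0$ might jump, and then $\gamma_0\circ\phi_0$ would be discontinuous. I would run the Courant--Lebesgue lemma as in Step~2 of Theorem~\ref{theo:unfrm-cnvg-Dmap}. The energy bound and the three-point normalization make $\{g^*_\varepsilon\}$ uniformly bounded and equicontinuous. This needs a uniform chord-arc type nondegeneracy of $\gamma_\varepsilon$, so that small image oscillation forces small parameter arcs. I expect this to be the delicate point, since $\gamma$ itself self-intersects; the $\varepsilon\cos t,\varepsilon\sin t$ separation degenerates, and the argument should rest on the equicontinuity of the compositions $g^*_\varepsilon$ rather than of $\phi_\varepsilon$.

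Granting equicontinuity, Arzel\`a--Ascoli gives $g^*_{\varepsilon_j}\to g_0^*$ uniformly. The maximum principle argument of Step~3 of Theorem~\ref{theo:unfrm-cnvg-Dmap} then upgrades the convergence to uniform convergence on $\overline{D^2}$. Hence $u_0\in C(\overline{D^2})$, and $g_0^*=\gamma_0\circ\phi_0$ by the pointwise limit computed above.

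If equicontinuity fails, the fallback is to note that $u_0$ is the harmonic extension of $\gamma_0\circ\phi_0$ and to show directly that $\phi_0$ has no jumps. A jump of $\phi_0$ would concentrate energy near a boundary point, and I would try to exclude this via the Courant--Lebesgue estimate, again using the three-point normalization.
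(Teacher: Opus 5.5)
\textbf{Gap: continuity of $u_0$ on $\overline{D^2}$.} Your energy bound, your interior convergence, and your identification of the trace with $\gamma_0\circ\phi_0$ a.e.\ are sound and match the paper; your dominated-convergence step is a slightly cleaner version of the paper's a.e.\ subsequence argument. The problem is the equicontinuity of $\{g^*_\varepsilon\}$ that you propose to grant: it is false in general, not just unproven. Courant--Lebesgue only produces short crosscuts. Turning a short crosscut into a small boundary arc needs a modulus for $\gamma_\varepsilon$, uniform in $\varepsilon$, saying that any two nearby points cut off an arc of small diameter. At a self-intersection $\gamma(t_1)=\gamma(t_2)$, the points $\gamma_\varepsilon(t_1),\gamma_\varepsilon(t_2)$ are $O(\varepsilon)$ apart while both complementary arcs have diameter bounded below, so no such modulus exists. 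Appealing to Step~2 of Theorem~\ref{theo:unfrm-cnvg-Dmap} does not help, since the same uniformity is silently assumed there.

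Concretely, take a figure-eight whose lobes have winding numbers $+1$ and $-1$. The last two coordinates of $u_\varepsilon$ are bounded by $\varepsilon$, so $u_0$ is planar, harmonic and weakly conformal. Hence it is holomorphic or antiholomorphic, and by the argument principle its boundary curve cannot have winding numbers of both signs. A uniform limit of the degree-one reparametrizations $\gamma\circ\phi_{\varepsilon_j}$ would have both. So the minimizers pinch a lobe off into a shrinking boundary arc, and $\phi_0$ genuinely jumps over a closed sub-loop. This also shows your fallback aims at the wrong target: jumps of $\phi_0$ cannot be excluded, only jumps of $\gamma_0\circ\phi_0$.

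\textbf{The missing idea (the paper's route).} You do not need any convergence up to the boundary. You already know the trace of $u_0$ lies in $H^{1/2}(\partial D^2)$ and equals $\gamma_0\circ\phi_0$ a.e. Because $\phi_0$ is monotone and $\gamma_0$ is continuous, $\gamma_0\circ\phi_0$ has one-sided limits at every point. Suppose they differed at some $a$. Then the Gagliardo integrand would be at least $c\,|\theta_1-\theta_2|^{-2}$ for $\theta_1<a<\theta_2$ near $a$, and $\int_0^\delta\!\int_0^\delta (x+y)^{-2}\,dx\,dy=\infty$, contradicting finiteness of the seminorm. So $\gamma_0\circ\phi_0$ agrees a.e.\ with a continuous function, obtained by changing it only on the countable discontinuity set of $\phi_0$. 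Since $u_0\in W^{1,2}(D^2)$ is harmonic, it is the Poisson integral of its trace, which is now continuous. Hence $u_0\in C(\overline{D^2})$ with boundary values $\gamma_0\circ\phi_0$. The convergence $u_{\varepsilon_j}\to u_0$ remains only locally uniform in $D^2$, which is all the theorem asserts.
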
 

\begin{proof}
\textbf{Step 1: Uniform boundedness of $u_\varepsilon$ in $W^{1,2}(D^2)$.}

(i) \textit{Bounding the Dirichlet energy:} 
For any $\varepsilon \in (0,1)$, we construct a radial extension $v_\varepsilon(\rho,\alpha) = \rho\gamma_{\varepsilon}(\alpha)$ on the disk $D^2$, where the pair $(\rho,\alpha)$ is the polar coordinates. 
The Dirichlet energy of $v_\varepsilon$ is given by
\begin{align*}
    \mathscr{E}(v_\varepsilon) &= \frac{1}{2}\int_{D^2}|\nabla v_\varepsilon|^2 \, d\mathcal{H}^2 = \frac{1}{2}\int_0^{2\pi} d\alpha \int_0^1 \big(|\gamma_\varepsilon(\alpha)|^2 + |\gamma'_\varepsilon(\alpha)|^2\big) \rho \, d\rho \\[0.1cm]
    &\leq \pi \sup \big(|\gamma_\varepsilon|^2 + \mathrm{Lip}(\gamma_\varepsilon)^2\big).
\end{align*}
Since $\gamma$ is continuous on the compact set $S^1$, we can uniformly bound $\gamma_\varepsilon$ and its Lipschitz constant independent of $\varepsilon$ as following
$$
\|\gamma_\varepsilon\|_{L^\infty(S^1)} \leq \|\gamma\|_{L^\infty(S^1)} + 1 =: m_1, 
$$
$$
\text{Lip}(\gamma_\varepsilon) \leq \sqrt{\text{Lip}(\gamma)^2 + 2} =: m_2. 
$$
Because $u_\epsilon$ minimizes the area energy among all functions with the boundary condition $u_\varepsilon|_{\partial D^2} = v_\varepsilon|_{\partial D^2}$, correspondingly, it minimizes the Dirichlet energy (For more details, please see \cite{Pe2025}). 
Then, for all $\varepsilon>0$,
$$
\mathscr{E}(u_\epsilon) \leq \mathscr{E}(v_\varepsilon) \leq \pi(m_1^2 + m_2^2) =: \frac{1}{2}M_1^2,
$$
notice that $M_1$ depends only on $\gamma$. 
Consequently, the gradient $\|\nabla u_\varepsilon\|_{L^2(D^2)} \leq M_1$.

\vspace{0.2cm}

(ii) \textit{Bounding the $L^2$ norm:} Since $u_\varepsilon$ is harmonic, by the Weak Maximum Principle,
$$
\sup_{\overline{D^2}} |u_\varepsilon| \leq \sup_{\partial D^2} |u_\varepsilon| = \sup_{S^1} |\gamma_\varepsilon| \leq m_1. 
$$
It immediately follows that
$$
\|u_\varepsilon\|_{L^2(D^2)} \leq m_1 \left(\int_{D^2} 1 \, d\mathcal{H}^2\right)^{1/2} = \sqrt{\pi}m_1 =: M_2, 
$$
where $M_2$ depends only on $\gamma$.

Combining (i) and (ii), we deduce
$$
\|u_\varepsilon\|_{W^{1,2}(D^2)} = \|u_\varepsilon\|_{L^2(D^2)} + \|\nabla u_\varepsilon\|_{L^2(D^2)} \leq M_2 + M_1 =: M. 
$$
Therefore, there exists a constant $M > 0$, depending only on $\gamma$, such that $\|u_\varepsilon\|_{W^{1,2}(D^2)} \leq M$ for all $\varepsilon > 0$.

\noindent
\paragraph{Step 2: Extracting a convergent subsequence using compactness of $W^{1,2}(D^2)$.}
Since the family $\{u_\varepsilon\}$ is uniformly bounded in
$W^{1,2}(D^2)$, there exists a subsequence
$\{u_{\varepsilon_j}\}_{j=1}^\infty$ and a map
$u_0\in W^{1,2}(D^2)$ such that
$$
u_{\varepsilon_j}\rightharpoonup u_0
\qquad \text{weakly in } W^{1,2}(D^2).
$$
For simplicity, we continue to denote this subsequence by
$\{u_{\varepsilon_j}\}$.

By the Rellich-Kondrachov Theorem \cite{Ev2022}, the embedding $W^{1,2}(D^2) \hookrightarrow L^2(D^2)$ is strongly compact. 
Thus, passing to a further subsequence (still denoted by $\{u_{\varepsilon_j}\}$), we obtain strong convergence in $L^2(D^2)$
\[ u_{\varepsilon_j} \xrightarrow{j\to\infty} u_0 \quad \text{strongly in } L^2(D^2). \]

\noindent
\paragraph{Step 3: Uniform Convergence of $\{u_{\varepsilon_j}\}$.} 
We claim that $u_{\varepsilon_j} \to u_0$ uniformly on any compact subset $K \subset\subset D^2$.

To see this, we recall the interior Estimates for Derivatives of harmonic functions \cite{Ev2022}. 
For any compact set $K \subset\subset D^2$ and any multi-index $\alpha$ with $|\alpha| = m$, there exists a constant $C_m$, which depends on $m$ and $\text{dist}(K, \partial D^2)$, such that
$$
\sup_K |D^\alpha u_j| \leq C_m \|u_j\|_{L^1(D^2)}.
$$
Since $\{u_j\}$ is uniformly bounded in $W^{1,2}(D^2)$, H\"older's inequality implies that
$$
\|u_j\|_{L^1(D^2)} \leq \left( \int_{D^2} |u_j|^2 \, d\mathcal{H}^2 \right)^{1/2} \big( \mathcal{H}^2(D^2) \big)^{1/2} \leq \sqrt{\pi} \|u_j\|_{L^2(D^2)} \leq \sqrt{\pi} \|u_j\|_{W^{1,2}(D^2)} \leq \sqrt{\pi} M. 
$$
In particular, taking $|\alpha| = 1$, we have for any $x, y \in K$
$$
|u_j(x)-u_j(y)|\leq\sup_K |\nabla u_j| \cdot |x-y| \leq C_1 \sqrt{\pi}M|x-y|. 
$$
So the sequence $\{u_j\}$ is uniformly Lipschitz, and hence equicontinuous, on $K$.

Therefore, by the Arzel\`a-Ascoli theorem, there exists a further subsequence, still denoted by $\{u_{\epsilon_j}\}$, such that
$$
u_{\varepsilon_j} \xrightarrow{j\to\infty} u_0 \quad \text{uniformly on } K. 
$$

Applying the results above to higher-order derivatives, inductively, a similar argument shows that any derivative $D^\alpha u_j$ converges uniformly on $K$.

\noindent
\paragraph{Step 4: Fixing the boundary parameterization.}
By three-point normalization (see Section~\ref{sec:settings}), we may assume that the family $\{u_\varepsilon\}$ satisfies the three-point condition. 
That is, for three fixed distinct points $e^{i\theta_1}, e^{i\theta_2}, e^{i\theta_3} \in \partial D^2$ and three fixed distinct points $p_1, p_2, p_3 \in S^1$ such that $\gamma(p_1), \gamma(p_2), \gamma(p_3)$ are also distinct, we have
$$
u_\varepsilon(e^{i\theta_k}) = \gamma_\varepsilon(p_k), \ k = 1, 2, 3. 
$$
Recall that for Douglas's minimizer, the boundary values satisfy $u_\varepsilon|_{\partial D^2} = \gamma_\varepsilon \circ \Phi_\varepsilon$ for some monotonic mapping $\Phi_\varepsilon$. 
Applying the three-point normalization again, and noting that M\"obius transformations preserve monotonicity, we can find a monotone function $\phi_\epsilon: [0, 2\pi) \to [0, 2\pi)$ such that
$$
u_\varepsilon(e^{i\theta}) = \gamma_\varepsilon(\phi_\varepsilon(\theta)), \ \forall \theta \in [0, 2\pi). 
$$
Since the family of functions $\{\phi_\varepsilon\}$ is uniformly bounded (as mapping into $[0, 2\pi)$), Lemma~\ref{lem:mono} guarantees the existence of a subsequence, denoted by $\{\phi_{\varepsilon_j}\}_{j=1}^\infty$, that converges pointwise to some monotone function $\phi_0:[0,2\pi)\to[0,2\pi)$
\footnote{Although the range of $\phi_0$ should be $[0,2\pi]$ as a limit, however, the target curve $\gamma_\epsilon$ is a closed loop parameterized on $S^1 \cong \R/2\pi\mathbb{Z}$, so we canonically identify the value $2\pi$ with $0$, then restrict the range of $\phi_0$ to $[0, 2\pi)$ without changing the boundary values $u_0|_{\partial D^2}$.} with
$$
\phi_{\varepsilon_j}(\theta) \xrightarrow{j\to\infty} \phi_0(\theta) \ \ \text{pointwise on } [0, 2\pi). 
$$

\noindent
\paragraph{Step 5: Boundary convergence and continuous representative.}
We claim that the trace $u_0|_{\partial D^2}$ coincides with $\gamma_0 \circ \phi_0$, and $\gamma_0 \circ \phi_0$ is continuous.

Recall that the trace operator \cite{Ev2022} $\text{Tr}: W^{1,2}(D^2) \to H^{1/2}(\partial D^2)$ is a bounded linear operator. 
Since $u_{\varepsilon_j} \rightharpoonup u_0$ weakly in $W^{1,2}(D^2)$, it follows that
$$
u_{\varepsilon_j}|_{\partial D^2} = \text{Tr}(u_{\varepsilon_j}) \rightharpoonup \text{Tr}(u_0) = u_0|_{\partial D^2} \quad \text{weakly in } H^{1/2}(\partial D^2).
$$
By the compactness of the fractional Sobolev embedding $H^{1/2}(\partial D^2) \hookrightarrow L^2(\partial D^2)$ \cite{GiTr1977}, we can pass to a subsequence (still denoted by $\{u_{\varepsilon_j}\}$) such that $u_{\varepsilon_j}|_{\partial D^2} $ converges to $u_0|_{\partial D^2}$ strongly in $L^2(\partial D^2)$, i.e.
$$
\|u_{\varepsilon_j}|_{\partial D^2} - u_0|_{\partial D^2}\|_{L^2(\partial D^2)} \xrightarrow{j\to \infty} 0. 
$$
From this strong $L^2$ convergence, we can extract a further subsequence (again denoted by $\{u_{\epsilon_j}\}$) such that
$$
\lim_{j\to\infty} u_{\varepsilon_j}|_{\partial D^2}(e^{i\theta}) = u_0|_{\partial D^2}(e^{i\theta}) \quad \text{for}\  a.e.\   \theta \in [0, 2\pi).
$$
Also by our previous construction, $\gamma_{\varepsilon_j} \to \gamma_0$ uniformly. 
Together with $\phi_{\varepsilon_j} \to \phi_0$ pointwise, we have
$$
\gamma_{\varepsilon_j}(\phi_{\varepsilon_j}(\theta)) \xrightarrow{j\to\infty} \gamma_0(\phi_0(\theta)). 
$$
Letting $j\to \infty$, we deduce that the limit functions agree almost everywhere, i.e.
$$
u_0|_{\partial D^2}(e^{i\theta}) = \gamma_0(\phi_0(\theta)) \quad \text{for a.e. } \theta \in [0, 2\pi). 
$$
Because the trace $u_0|_{\partial D^2} \in H^{1/2}(\partial D^2)$, its Gagliardo seminorm \cite{GiTr1977} must be finite
$$
[\gamma_0 \circ \phi_0]_{H^{1/2}(\partial D^2)} = \left(\int_0^{2\pi} \int_0^{2\pi} \frac{|\gamma_0(\phi_0(\theta_1)) - \gamma_0(\phi_0(\theta_2))|^2}{|\theta_1 - \theta_2|^2} \, d\theta_1 d\theta_2\right)^{1/2} < \infty.
$$
Since $\phi_0$ is a monotonic function, it can only exhibit jump discontinuities. 
As $\gamma_0$ is a continuous mapping, if $\gamma_0 \circ \phi_0$ has discontinuities, they should be jumps as well.

Now assume $\gamma_0 \circ \phi_0$ has a jump discontinuity at some point, the numerator in the integrand above would be strictly bounded away from zero for $\theta_1, \theta_2$ on opposite sides of the jump. 
This would cause the double integral around the discontinuity to diverge, hence a contradiction. 
Thus, $\gamma_0 \circ \phi_0$ must be a continuous function on $[0, 2\pi)$.

\vspace{1.5mm}

Finally, note that the functions in $W^{1,2}(D^2)$ are defined up to a set of measure zero. 
Since $u_0|_{\partial D^2}$ equals the continuous function $\gamma_0 \circ \phi_0$ a.e., we can redefine $u_0|_{\partial D^2}$ on a measure-zero set so that they match exactly, i.e.
$$
u_0|_{\partial D^2} \equiv \gamma_0 \circ \phi_0 \quad \text{everywhere on } \partial D^2.
$$

\noindent
\paragraph{Step 6: Global continuity on the closed disk.}

We claim that $u_0 \in C(\overline{D^2})$. 
From the previous steps, we have established that $u_0 \in W^{1,2}(D^2)$ and is weakly harmonic in $D^2$; by Weyl's lemma \cite{GiTr1977}, it's harmonic;
Also, its trace $u_0|_{\partial D^2} = \gamma_0 \circ \phi_0$ is continuous on $\partial D^2$.

Using the Poisson integral formula\cite{Ev2022}, we could construct a classical harmonic function $v_0 \in C^{\infty}(D^2) \cap C(\overline{D^2})$ that continuously extends the boundary data $u_0|_{\partial D^2}$. 
More precisely, 
$$
v_0(x) = \frac{1-|x|^2}{2\pi}\int_{\partial D^2}\frac{u_0(y)}{|x-y|^2}\, d\mathcal{H}^1_y. 
$$

(i) Before comparing $u_0$ and $v_0$, we must ensure $v_0 \in W^{1,2}(D^2)$. Consider the admissible class of functions with the same boundary trace 
$$
\mathcal{A} := \{u \in W^{1,2}(D^2) :\  u|_{\partial D^2} = u_0|_{\partial D^2} \},
$$
since $u_0 \in \mathcal{A}$, the set is non-empty. 

\vspace{1.5mm}

The Dirichlet energy $\mathscr{E}(f) = {\frac{1}{2}\int_{D^2}|\nabla f|^2}$ is coercive and strictly convex on $\mathcal{A}$, guaranteeing the existence of a unique minimizer $\hat{u} \in \mathcal{A}$ which is weakly harmonic\cite{Ev2022}. 
By the uniqueness of solutions to the Dirichlet problem \cite{Ev2022}, the classical harmonic extension must coincide with the energy minimizer, i.e., $v_0 = \hat{u}$. 
Consequently, $v_0 \in W^{1,2}(D^2)$.

\vspace{2mm}

(ii) Now, we show that $u_0 = v_0$ almost everywhere. 
We define the difference function $$ w = u_0 - v_0. $$
Since $u_0, v_0 \in W^{1,2}(D^2)$ and they share the same trace on $\partial D^2$, we have $w \in W^{1,2}_0(D^2)$. 
Furthermore, because both $u_0$ and $v_0$ are harmonic in $D^2$, $w$ is also harmonic. Therefore, for any test function $\varphi \in W^{1,2}_0(D^2)$, it satisfies
$$
\int_{D^2} \nabla w \cdot \nabla \varphi \, d\mathcal{H}^2 = 0. 
$$
In particular, taking $w$ itself as the test function, we obtain
$$
\int_{D^2} |\nabla w|^2 \, d\mathcal{H}^2 = 0,
$$
which implies that $\nabla w = 0$ in $D^2$. 
Since $w$ has zero trace on the boundary, it follows that $w \equiv 0$ in $D^2$.

Thus, $u_0 \equiv v_0$, it's continuous on the closed disk $\overline{D^2}$, which concludes the proof.
\end{proof}

\begin{proposition} \label{prop:Minimality-of-the-Limit Surface}    
Let
$
u_{\varepsilon_j}:D^2\to\R^4
$
be the sequence of Douglas minimizers associated to the lifted curves
$\gamma_{\varepsilon_j}$, and suppose
$
u_{\varepsilon_j}\rightharpoonup u_0
$
weakly in
$
W^{1,2}(D^2).
$
Then $u_0$ minimizes area among all competitors spanning the limiting boundary curve $u_0|_{\partial D^2}$.
\end{proposition}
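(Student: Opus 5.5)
The plan is to run the comparison argument of Proposition~\ref{prop:min-area} in the Sobolev setting. Uniform $C^1$ convergence is no longer available, so it will be replaced by lower semicontinuity of the Dirichlet energy together with conformality of the Douglas minimizers. Fix a Lipschitz competitor $v:\overline{D^2}\to\R^4$ with $v|_{\partial D^2}=u_0|_{\partial D^2}=\gamma_0\circ\phi_0$. We must show $\operatorname{Area}(u_0)\le\operatorname{Area}(v)$. By the classical reduction used in Douglas' method (Morrey's $\varepsilon$-conformality lemma), it suffices to compare against $\mathscr{E}(v)$ after reparametrizing $v$ by an almost conformal diffeomorphism of the disk. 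So the target inequality is
\[
\operatorname{Area}(u_0)\le \mathscr{E}(u_0)\le \liminf_{j}\mathscr{E}(u_{\varepsilon_j}) = \liminf_j \operatorname{Area}(u_{\varepsilon_j}) \le \operatorname{Area}(v)+o(1).
\]
The first inequality is pointwise, since $J_u\le \tfrac12|\nabla u|^2$. The second is weak lower semicontinuity of the $L^2$ norm of gradients under $u_{\varepsilon_j}\rightharpoonup u_0$ from Theorem~\ref{theo:conv}. The equality holds because each $u_{\varepsilon_j}$ is conformal, so $\operatorname{Area}=\mathscr{E}$ for it.

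For the last inequality, I would build boundary-adjusted competitors as in Step~1 of Proposition~\ref{prop:min-area}. Set $v_j=v$ on $|x|\le 1-\delta_j$. On the annulus $1-\delta_j\le r\le 1$, linearly interpolate between $v((1-\delta_j)e^{i\theta})$ and $u_{\varepsilon_j}(e^{i\theta})=\gamma_{\varepsilon_j}(\phi_{\varepsilon_j}(\theta))$. Then $v_j$ spans $\gamma_{\varepsilon_j}$ with the Douglas parametrization, so minimality of $u_{\varepsilon_j}$ gives $\operatorname{Area}(u_{\varepsilon_j})\le\operatorname{Area}(v_j)$. It remains to show that the area of the collar tends to zero for a suitable choice $\delta_j\to0$.

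The main obstacle is exactly this collar estimate. The boundary maps $\gamma_{\varepsilon_j}\circ\phi_{\varepsilon_j}$ converge to $\gamma_0\circ\phi_0$ only pointwise (and in $L^2$), not uniformly, and the $\phi_{\varepsilon_j}$ need not be Lipschitz. So the tangential derivative along the collar is not controlled by the naive bound. I would try two fixes in order.

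\emph{First fix.} Upgrade the boundary convergence to uniform convergence. Theorem~\ref{theo:conv} shows the limit $\gamma_0\circ\phi_0$ is continuous, and with the three-point normalization the Courant--Lebesgue lemma gives equicontinuity of $\{u_{\varepsilon_j}|_{\partial D^2}\}$, exactly as in Step~2 of Theorem~\ref{theo:unfrm-cnvg-Dmap}. Since the energy bound $M_1$ holds uniformly, Arzel\`a--Ascoli and the maximum principle then give uniform convergence on $\overline{D^2}$.

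\emph{Second fix.} Estimate the collar area not by the Lipschitz constant but by the area of a thin ruled strip between two curves. The strip is swept by segments of length at most $\sup_\theta|v((1-\delta_j)e^{i\theta})-u_{\varepsilon_j}(e^{i\theta})|$, and this tends to $0$ by uniform continuity of $v$ and the uniform convergence from the first fix. Its area is bounded by this length times the lengths of the two bounding curves. Those lengths are bounded: $v$ is Lipschitz, and $\gamma_{\varepsilon_j}\circ\phi_{\varepsilon_j}$ is a monotone reparametrization of a curve of length at most $2\pi\sqrt{\mathrm{Lip}(\gamma)^2+1}$. Note that the area of the strip is reparametrization-invariant, so the non-Lipschitz $\phi_{\varepsilon_j}$ causes no harm.

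Combining these gives $\limsup_j\operatorname{Area}(v_j)\le\operatorname{Area}(v)$, and hence $\operatorname{Area}(u_0)\le\operatorname{Area}(v)$. Finally, as in the $C^1$ case, the last two coordinates of $u_0$ are harmonic with zero trace and so vanish. Therefore $u_0$ is planar and is an area minimizer spanning $u_0|_{\partial D^2}$.
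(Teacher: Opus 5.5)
You have reproduced the paper's route, including its weak point, and that step fails. The lower-semicontinuity half of your chain is fine. The gap is the last inequality, specifically your first fix, on which the collar estimate depends.

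\textbf{Why the first fix fails.} The Courant--Lebesgue lemma only gives, near each boundary point, a crosscut whose image has length $O\bigl((\log 1/\delta)^{-1/2}\bigr)$. To conclude that the boundary arc it cuts off has small image, you need the Jordan modulus (points of the curve at distance $<\lambda(\eta)$ are joined by an arc of diameter $<\eta$) \emph{uniformly} over the family. For $\gamma_\varepsilon$ this modulus degenerates. If $\gamma(t_1)=\gamma(t_2)$ with $t_1\neq t_2$, then $|\gamma_\varepsilon(t_1)-\gamma_\varepsilon(t_2)|\le 2\varepsilon$, while both arcs of $\gamma_\varepsilon$ joining these points have diameter bounded below independently of $\varepsilon$. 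So a crosscut of length $O(\varepsilon)$ can pinch off a whole loop of $\gamma$ on a shrinking boundary arc, and the three-point condition does not prevent this. Without uniform convergence of the traces, the segments of your ruled strip need not get short, so the second fix falls with the first. The paper's proof has the same structure and the same gap: it imports the collar estimate of Proposition~\ref{prop:min-area}, which rests on the uniform boundary convergence asserted in Theorem~\ref{theo:unfrm-cnvg-Dmap}.

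\textbf{The target inequality is false in general.} The inequality $\liminf_j\operatorname{Area}(u_{\varepsilon_j})\le\operatorname{Area}(v)+o(1)$ can fail. Let $\gamma$ be two externally tangent unit circles traversed with opposite orientations. This is a $C^1$ curve with winding number $+1$ on one disc and $-1$ on the other. Then $\operatorname{Area}(u_{\varepsilon_j})\ge\operatorname{Area}(P\circ u_{\varepsilon_j})\ge\int_{\R^2}|\mathrm{ind}_\gamma(p)|\,dp=2\pi$ for every $j$, where $\mathrm{ind}_\gamma$ is the winding number. On the other hand, $u_{\varepsilon_j}\to u_0$ in $C^\infty_{\mathrm{loc}}(D^2)$ and the last two coordinates are bounded by $\varepsilon_j$. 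So $u_0=(f,0,0)$ with $f$ harmonic and weakly conformal, hence holomorphic or antiholomorphic. By the argument principle its continuous trace has winding numbers of one sign, so it sweeps at most one disc and $\operatorname{Area}(u_0)\le\pi$. Since $u_0$ is then a M\"obius parametrization of one disc or a constant, $v=u_0$ is a Lipschitz competitor, and your chain would give $2\pi\le\pi$.

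\textbf{A repair in your spirit.} The lost area is a bubble escaping through the boundary, so the comparison must be made away from it. Put a rescaled copy of $v$ on $|x|\le1-2\delta$ and keep $u_{\varepsilon_j}$ on $1-\delta\le|x|\le1$. In between, interpolate linearly from $v|_{\partial D^2}=u_0|_{\partial D^2}$ to $u_{\varepsilon_j}|_{|x|=1-\delta}$; this also keeps the competitor in $W^{1,2}$, which interpolating to the raw trace does not guarantee. The segments are short for small $\delta$ and large $j$, by continuity of $u_0$ on $\overline{D^2}$ and interior uniform convergence. The length of $u_{\varepsilon_j}$ on $|x|=1-\delta$ is at most the length of $\gamma_{\varepsilon_j}$, because $\partial_\theta u_{\varepsilon_j}$ is the Poisson integral of the boundary measure. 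Douglas minimality then bounds the area of $u_{\varepsilon_j}$ on $|x|\le1-\delta$ by $\operatorname{Area}(v)$ plus an error that vanishes as $j\to\infty$ and then $\delta\to0$. $C^1_{\mathrm{loc}}$ convergence then gives $\operatorname{Area}(u_0)\le\operatorname{Area}(v)$.

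Alternatively, once you know $u_0=(f,0,0)$ with $f$ (anti)holomorphic, you get directly $\operatorname{Area}(v)\ge\operatorname{Area}(P\circ v)\ge\int_{\R^2}|\deg(P\circ v,p)|\,dp=\operatorname{Area}(u_0)$.
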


\begin{proof}

We will mostly transfer the proof from Proposition~\ref{prop:min-area} over here. 
Thanks to the weakly conformal property of $u_\varepsilon$, the area coincides with the Dirichlet energy. 
Since the Dirichlet energy functional in $W^{1,2}(D^2)$ is lower semi-continuous, so is the area functional. Thus, we can slightly adapt the comparison argument as follow.

With any Lipschitz competitor $v$ and its corresponding $v_\varepsilon$ as in Proposition \ref{prop:min-area}, by $u_{\varepsilon_j}\to u_0$ weakly in $W^{1,2}(D^2)$ and the lower semi-continuity of area functional,
\[
\mathrm{Area}(u_0)=\mathrm{Area}(\lim_{j\to\infty}u_{\varepsilon_j})\leq \liminf_{j\to\infty} \mathrm{Area}(u_{\varepsilon_j})\leq\liminf_{j\to\infty}\mathrm{Area}(v_{\varepsilon_j})= \mathrm{Area}(v).
\]
Therefore, $u_0$ is still minimizing the area.
\end{proof}
\begin{remark}
We actually have $\mathrm{Area}(u_0) = \displaystyle{\liminf_{j\to\infty}\mathrm{Area}(u_{\epsilon_j})}$ here. 
Suppose not. Then we may apply the same construction as in Proposition \ref{prop:min-area} to $u_0$ instead of $v$, obtaining a competitor $u_{0,\epsilon,\delta}$.

Note that $u_{\epsilon_j}\to u_0$ uniformly on each interior compact subset of $D^2$, and each $u_{\epsilon_j}$ is harmonic, it follows that $u_0$ is harmonic, hence is Lipschitz, on every interior compact subset of $D^2$.
In the above construction, we only use the restriction of $u_0$ to a compact subset contained in the interior of $D^2$, thus $u_{0,\epsilon,\delta}$ is an admissible  Lipschitz competitor. It shows that
\[\mathrm{Area}(u_0) < \liminf_{j\to\infty}\mathrm{Area}(u_{\epsilon_j})\le \liminf_{j\to\infty}\mathrm{Area}(u_{0,{\epsilon_j}}) \le \mathrm{Area}(u_0),\]
a contradiction.

\end{remark}
Notice that, even though $u_0$ is a mapping from $D^2 \to \R^4$, but its image actually lies in $\R^2$. If not, the consider the projection $P:\R^4 \to \R^2$ with $(r_1, r_2, r_3,r_4) \mapsto (r_1, r_2)$, then 
\[\mathrm{Area}(P\circ u_0) < \mathrm{Area} (u_0),\]
which is contradict to the minimality of $u_0$.

Therefore, we have the sets $P\circ u_0(D^2) = u_0(D^2)$ indeed.
\begin{remark}
    One interpretation of null homotopy is geometric: a continuous deformation of the geometric curve $\gamma(S^1)\subset \mathbb{R}^2$ into a point. In this sense, $u_0$ realizes such a deformation while minimizing the area among all Lipschitz competitors.

    The other interpretation is the parameterized one as in Definition \ref{def:nullhomo}, which requires preserving the boundary parameterization $\gamma:S^1\to\mathbb{R}^2$. To strictly follow this, we will construct a homotopy $\mathscr{h}_0:[0,1]\times S^1\to\mathbb{R}^2$ with boundary trace $\gamma$ in the next section.

\end{remark}

\subsection{Construction of the Minimal Homotopy} \label{subsec:constrct-minhom}
As we saw in Subsection~\ref{subsec:C1}, the map $u_0$ essentially gives a minimal null homotopy of $\gamma$, up to a boundary reparameterization from $\gamma\circ\phi_0$ to $\gamma$. 
It therefore remains to show that this reparameterization can be corrected without introducing additional swept area. 
In this section, we establish this uniformly for both the $C^1$ and Lipschitz settings. 

The main idea is, to use the first half of the time to sweeping out $u_0(D^2)$, while using the second half to reparameterize the boundary curve $\gamma_0(S)$. 
The key fact in making this construction well-defined is the implicit continuity of $\phi_0$.

Notice that, in both the $C^1$ and Lipschitz settings, we may assume that $\gamma$ is parameterized by arc length. In particular, by \cite{HeKoShTy2015},
$$
|\dot{\gamma}(t)|
=
\lim_{u\to t,\ u\neq t}
\frac{\mathrm{dist}(\gamma(t),\gamma(u))}{|t-u|}
= 1, \qquad \mathcal{H}^1\text{-a.e. } t.
$$
The same holds for $\gamma_0$.

We claim that $\phi_0$ is actually continuous. 
Indeed, if $\phi_0$ has a jumped discontinuity at $a$, i.e.
$$
\lim_{t\to a^-} \phi_0(t) = b_-<b_+=\lim_{t\to a^+} \phi_0(t).
$$
Since $\gamma_0\circ\phi_0$ is continuous, thus $\gamma_0(b_-)=\gamma_0(b_+).$ 
This implies that the parameter interval $[b_-, b_+]$ corresponds to a closed loop, whose length is $(b_+-b_-)>0.$ 

However, since $\gamma_0 \circ \phi_0(S^1) = u_0|_{\partial D^2}(S^1)=\gamma_0(S^1)$, both curves have the same image and therefore must have the same length, a contradiction.

Therefore, the minimal null homotopy of $\gamma_0$ would be
$$
\mathscr{h}_0(te^{i\theta})=\begin{cases}
u_0\big(2te^{i\theta}\big)\ \ & 0\leq t<1/2\\
\gamma_0\big((2-2t)\phi_0(\theta)+(2t-1)\theta\big) & 1/2\leq t\leq1.
\end{cases}
$$
Firstly, since $\phi_0$ is continuous, at the interface $t = 1/2$, $\mathscr{h}_0(te^{i\theta})$ is continuous. 
At the interface $t=1/2$, we have that 
$$
u_0(e^{i\theta}) = \gamma_0 \circ\phi_0(\theta) .
$$
At the outer boundary $t = 1$, we also have 
$$
\mathscr{h}_0(e^{i\theta}) = \gamma_0(\theta),
$$ 
which satisfies the required boundary parameterization. Therefore, $\mathscr{h}_0$ is a well-defined homotopy.

Secondly, the entire image of the outer annulus $A = \{te^{i\theta} :\  1/2 \le t \le 1\}$ is contained within $\gamma_0(S^1)$, which has zero area. 
This ensures that the reparameterization does not affect the minimality of area, i.e.
$$
\mathrm{Area}(\mathscr{h}_0)=\mathrm{Area}(u_0).
$$
As $u_0$ and $\gamma_0$ are actually in $\R^2$ (with 0 in both third and forth component), so $\mathscr{h}_0$ is the minimal homotopy we desired.

\section{Discussion}
We obtained minimal-area homotopies between immersed planar curves by lifting to higher codimension and passing to the limit of classical Plateau solutions. 
The construction is variational and remains entirely within the framework of maps and the area formula.

Several directions remain open. 
One is to better understand the geometry, regularity, and possible uniqueness of the limiting minimizer. 
Another natural direction is to study how the minimal swept area changes when the boundary curve is slightly perturbed. 
In particular, it would be interesting to understand whether small changes in the curve produce small changes in the minimum area and in the corresponding minimizing homotopies.
It would also be natural to investigate whether similar lifting constructions can be developed for higher-dimensional immersed manifolds. 
In particular, one may ask whether an immersed $n$-sphere in $\R^{n+1}$ can be lifted into higher codimension, approximated by embedded hypersurfaces, and studied through Plateau-type minimization and compactness arguments.
More generally, it would be interesting to understand whether variational approaches of this type can provide a broader framework for minimal homotopy problems beyond the planar setting.

\printbibliography
\end{document}